\def\section{\@startsection{section}{1}%
  \z@{1.1\linespacing\@plus\linespacing}{.8\linespacing}%
  {\normalfont\Large\scshape\centering}}
\theoremstyle{plain}
\newtheorem*{thmAl}{Albert's Fusion Theorem}
\newtheorem*{conj*}{Root Groups Conjecture}
\newtheorem*{thm1.2}{(1.2) Theorem}
\newtheorem*{thm1.3}{(1.3) Theorem}
\newtheorem*{thm1.4}{(1.4) Theorem}
\newtheorem*{prop*}{Proposition}
\newtheorem*{thm*}{Theorem}
\def\eroman{\etype{\roman}}
\newtheorem{prop}{Proposition}[section]
\newtheorem{thm}[prop]{Theorem}
\newtheorem{cor}[prop]{Corollary}
\newtheorem{lemma}[prop]{Lemma}
\theoremstyle{definition}
\newtheorem{Def}[prop]{Definition}
\newtheorem*{Def*}{Definition}
\newtheorem{Defs}[prop]{Definitions}
\newtheorem{examples}[prop]{Examples}
\newtheorem*{notation*}{Notation}
\newtheorem{remark}[prop]{Remark}
\newcommand{\etype}[1]{\renewcommand{\labelenumi}{(#1{enumi})}}
\newcommand{\la}{\lambda}
\newcommand{\ff}{F}
\newcommand{\zz}{\mathbb{Z}}
\newcommand{\frakA}{\mathfrak{A}}
\newcommand{\ga}{\alpha}
\newcommand{\gb}{\beta}
\newcommand{\gc}{\gamma}
\newcommand{\gd}{\delta}
\newcommand{\gre}{\epsilon}
\newcommand{\gl}{\lambda}
\newcommand{\gr}{\rho}
\newcommand{\charc}{{\rm char}}
\newcommand{\lan}{\langle}
\newcommand{\ran}{\rangle}
\newcommand{\half}{\textstyle{\frac{1}{2}}}
\newcommand{\e}{\mathbb{1}}
\numberwithin{equation}{section}
\begin{document}
\title[Flexible idempotents in nonassociative algebras]{Flexible idempotents  in nonassociative algebras}
\author[Louis Rowen, Yoav Segev]
{Louis Rowen\qquad Yoav Segev}

\address{Louis Rowen\\
         Department of Mathematics\\
         Bar-Ilan University\\
         Ramat Gan\\
         Israel}
\email{rowen@math.biu.ac.il}
\address{Yoav Segev \\
         Department of Mathematics \\
         Ben-Gurion University \\
         Beer-Sheva 84105 \\
         Israel}
\email{yoavs@math.bgu.ac.il}
\thanks{$^*$The first author was supported by the Israel Science Foundation grant 1623/16}

\keywords{axis, flexible algebra, power-associative, fusion,
idempotent,
 noncommutative Jordan} \subjclass[2010]{Primary: 17A05, 17A15, 17A20 ;
 Secondary: 17A36,
17C27}

\begin{abstract}
``Fusion rules'' are laws of multiplication among eigenspaces of an
idempotent.
 We establish fusion rules for flexible idempotents, following Albert in the  power-associative
case.  We define the notion of an axis in the noncommutative setting
and  accumulate information about pairs of axes.
\end{abstract}

\date{\today}
\maketitle
\section{Introduction}
Throughout this paper $A$ is an algebra (not necessarily
associative, not necessarily with a multiplicative unit element
$\e$) over a field $F$ with $\charc(F)\ne 2$ unless stated
otherwise. We often denote multiplication $x\cdot y$ in~$A$ by
juxtaposition: $x y$. $A^{(\circ)}$ is the algebra with the same
vector space structure as~$A$, but  where multiplication is defined
by $x\circ y = \half(xy+yx).$  Throughout we denote
\[
C(A)=\{x\in A\mid xy=yx, \text{ for all }y\in A\}.
\]

\begin{Defs}\label{defs main20}$ $
\begin{enumerate}
\item  The algebra $A$ is {\it flexible} if it
satisfies the identity $(xy)x=x(yx).$

\item $A$ is \textbf{power-associative} if $F[x]$ is associative (and therefore
commutative) for each $x \in A.$
\end{enumerate}
\end{Defs}

Commutative algebras are flexible, since
\begin{equation}\label{flex1}
(xy)x=(yx)x=x(yx).
\end{equation}

 In a flexible algebra we write $xyx$ without
parentheses, since there is no ambiguity. Linearizing the flexible
axiom yields \begin{equation}\label{flexiblelin} (xy)z + (zy)x =
x(yz) + z(yx).\end{equation}

We study eigenvalues of idempotents $a\in A,$ in the spirit of
\cite{A}.  We often assume that $A$ is  flexible, our objective
being to generalize the well-known commutative theory.

In \S\ref{id} we also often assume that $A$ is power-associative,
relying heavily on  Albert~\cite{A}.

\begin{enumerate}\eroman\item Following Albert's
terminology, we define the {\it left and right} multiplication maps
 $L_a(b) := a\cdot b $ and
  $R_a(b) := b\cdot a.$

 \item  We write $A_{\la}(X_a)$ for the eigenspace
 of $\la$ with respect to the transformation $X_a$, $X\in \{L,R\},$ i.e., $A_{\la}(L_a) = \{ v \in A: a\cdot v = \la v\},$
and similarly for $A_{\la}(R_a).$ Often we just write $A_\la$ for
$A_{\gl}(L_a)$, when $a$ is understood.

\item
We denote: $A_{\gl,\gd}(a):= A_{\gl}(L_a)\cap A_{\gd}(R_a).$ An
element in $A_{\gl,\gd }(a)$ will be called a $(\gl,\gd)$-{\it
eigenvector} of $a,$  and $(\gl,\delta)$ will be called its {\it
eigenvalue}.
\end{enumerate}

We just write $A_{\gl,\gd }$ when the idempotent $a$ is  understood.
Thus
\[
 A_{\la,\la}=\{ v \in A: a\cdot v = \la v=v\cdot a\},
\]  cf.~\cite[\S 1.1, p.~263]{HSS},
and $a \in A_{1,1}$. Of course $A_{\la,\la}=A_\la $ when $A$ is
commutative. Lemma~\ref{acationscom}  gives identities in the
operators $L_a, R_a\colon A\to A.$
\smallskip

We study idempotents in terms of the following notions.

\begin{Defs}\label{defs main}$ $
\begin{enumerate}

\item  Idempotents $a,b\in A$  are
{\it orthogonal} if $  ab =ba =0.$

\item  An idempotent $a \in A$ is
{\it primitive} if it cannot be written as the sum of nontrivial
orthogonal idempotents.

 \item  An idempotent $a \in A$ is  left (resp.~right)
{\it absolutely primitive} if $A_1(L_a)=Fa$ (resp.~$A_1(R_a)=Fa$).
The idempotent $a$ is {\it absolutely primitive} if it is both left
and right absolutely primitive.

(Sometimes this is called ``primitive'' in the literature.)

 \item
An idempotent $a\in A$  is  {\it flexible} if  $L_aR_a=R_aL_a$, 
and $(xa)x = x(ax)$ for all $x \in A$.
\end{enumerate}
\end{Defs}

\noindent
{\bf Lemma A} (Lemmas~\ref{deg7} and \ref{acationscom}).
Suppose $a\in A$ is a given flexible idempotent.
\begin{enumerate}\eroman

 \item  $R_a^2   - R_a  =  L_a^2 -    L_a.$

\item  $R_a(R_a+L_a-1)=L_a(R_a+L_a-1).$

\item
If  $A$ is power-associative over a field of characteristic $\ne
2,3,$ then
\[
(X_a - 1)  Y_a (L_a + R_a -1) = 0,\text{ for }X,Y\in \{R,L\}.
\]
\end{enumerate}

Next, write
\begin{equation*}\label{eigenspaces}
\mathring{A}_{1/2}(a) =\{x\in A\mid ax+xa=x\}.
\end{equation*}
(We write $ \mathring{A}_{1/2},$ when $a$ is understood.) Albert
proved for $A$ power-associative  with $\charc(F)\ne 2,3$ that
\[
A=A_{1,1} \oplus A_{0,0}\oplus \mathring{A}_{1/2}.
\]
This is reproved   directly as Theorem~\ref{axi}(i).
The following theorem of Albert then describes the appropriate
fusion laws in the algebra~$A:$
\medskip

\noindent
%
\begin{thmAl}[{\cite[Theorem 3, p.~560, Theorem 5, p.~562]{A}}]
Suppose that $A$ is power-associative with $\charc(F)\ne 2,3.$ Let
$a\in A$ be an idempotent.

\begin{enumerate}\eroman
\item
$A_{0,0}, A_{1,1}$ are subalgebras.

\noindent Furthermore for $A$ flexible,
\item  $A_{\la,\gl} \mathring{A}_{1/2} \subseteq A_{1 -\la,1 -\la}+ \mathring{A}_{1/2}, \quad \mathring{A}_{1/2} A_{\la,\la}  \subseteq A_{1 -\la,1 -\la}+ \mathring{A}_{1/2} ,\quad$ for
$\la = 0,1.$
\item
$a \mathring{A}_{1/2} ,  \mathring{A}_{1/2} a\subseteq
\mathring{A}_{1/2}.$
\end{enumerate}
\end{thmAl}

Proposition~\ref{prop 3.4} provides a useful decomposition result into
eigenspaces.

\medskip

In \S\ref{nc1} we drop power-associativity, and turn to a
noncommutative version of axes, in preparation for the study of
flexible axial algebras in \cite{RoSe3}. The simplest nontrivial
situation is for $A_1(L_a)= \ff a$ and for $L_a$ to have exactly one
eigenvalue $\notin \{ 0,1\}$. (When the   algebra $A$ is not
power-associative,   the third eigenvalue need not be $\half$. This
makes \cite{T} quite surprising, giving a condition for $\frac
12$~to be an eigenvalue of an element of $A$ after all.)

Accordingly, we have:

\begin{Def}\label{defs main3}$ $
\smallskip
Let $a\in A$ be an idempotent, and $\gl,\gd\notin\{0,1\}$ in $\ff$.
\begin{enumerate}
\item
 $a$ is a {\it left axis of type $\gl$} if
\begin{itemize}
\item[(a)]
$a$ is  left absolutely primitive, i.e., $A_1 = \ff a$.

\item[(b)]
$(L_a-\gl)(L_a-1)L_a =0.$

\item[(c)]
(cf.~Proposition \ref{prop 3.4}) There is a direct sum decomposition of
$A$ which is a $\zz_2$-grading:
\[
A=\overbrace{A_{0}\oplus A_1}^{\text {$+$-part}}\oplus
\overbrace{A_{\gl}}^{\text{$-$-part}},
\]
recalling that $A_{\gl}$ means $A_{\gl}(L_a)$. We call this grading
the {\it left axial fusion rules}.
\end{itemize}

\item
A {\it right axis of type $\gl$} is defined similarly.
\item
$a$ is an {\it axis (2-sided) of type $(\gl,\gd)$} if $a$ is a left
axis of  type $\gl$ and a    right axis of type $\gd$ and, in
addition,
 $L_a R_a =  R_a  L_a.$
Thus $A_{1,1} =\ff a =A_1$; in particular $A_{1,\gd} = A_{\gl,1} =
0$. Hence
$$A = A^{++} \oplus A^{+-} \oplus  A^{-+} \oplus A^{--},$$
is $\zz_2\times\zz_2$ grading of $A$ (multiplication $(\gre,\gre')(\gr,\gr')$
is defined in the obvious way for $\gre,\gre',\gr,\gr'\in\{+,-\}$),
 where
 \begin{itemize}
\item$ A^{++}=A_{1} \oplus  A_{0,0}, $ \item $A^{+-}= A_{0,\gd},$
 \item$ A^{-+}=  A_{\gl,0},$
 \item $  A^{--}=  A_{\gl,\gd }.$
 \end{itemize}
\item The axis $a$  is of {\it Jordan type $(\gl,\gd)$} if
 $A_{\gl,0} = A_{0,\gl}=0.$

 In this case, $A^{+-}= A^{-+} = 0,$ $A_{0,0} = A_0,$  and
 $$A = A_{1} \oplus A_{0} \oplus A_{\gl,\gd }.$$
\end{enumerate}
\end{Def}
(This definition is close to commutative, but there are natural
noncommutative examples given in Examples~\ref{notJJ} and
\ref{flex}.)

Note that besides being noncommutative,   Definition \ref{defs
main3}(1c) {\bf does not} require the condition that $A_{0}$ is a
subalgebra, contrary to the usual hypothesis in the commutative
theory of axial algebras. But this condition does not seem to
pertain to any of the proofs.

If $a$ is an axis of type $(\gl,\gd),$ then by Definition~\ref{defs
main3}(3),  we can write $x\in A$ as
\[\label{2de}
x=\ga_x a+x_{0,0}+x_{0,\gd}+x_{\gl,0}+x_{\gl,\gd},\qquad
x_{\gc,\gr}\in A_{\gc,\gr},
\]
which we call  the {\it (2-sided) decomposition of} $x$ with respect
to $a.$

When $a$ is of Jordan type, we have the much simpler decomposition
$$x = \ga_xa +x _0 + x_{\gl,\gd}$$
with $x_0\in A_0,$ and $x_{\gl,\gd}\in A_{\gl,\gd },$ which is both
the left decomposition and right decomposition.

 Much of the axial
theory (cf.~\cite{HRS2}) can be
 generalized to this noncommutative setting, as exemplified in the
 following results
\medskip

\noindent
{\bf Proposition B} (Corollary \ref{Jt}).
\begin{enumerate}\eroman
\item
An axis $a$ of Jordan type $(\gl,\gd)$ is in $C(A),$ iff either
$A_{\gl,\gd }=0,$
or $\gl=\gd.$

\item
Let $a\ne b$ be two axes  with $ab=ba$ and $a$ of type  $(\gl,\gd)$.
Then either $ab=0= b_{\gl,\gd},$ or $\gl = \gd.$ In particular, if
$a$ is of Jordan type, then either $a\in C(A)$ or $ab=0$.
\end{enumerate}
\medskip

\noindent
{\bf Proposition C} (Proposition~\ref{twoax}).
If $A$ is an algebra generated by two  axes $a$
and $b$ of Jordan type (whose types may be distinct!), then
$$A = \ff a + \ff b + \ff ab.$$

For flexible algebras we have
\medskip

\noindent
{\bf Theorem D} (Theorem \ref{flex a,b}).
Suppose $A$ is a flexible algebra. Then
\begin{enumerate}
\item  Any axis
$a$ has some  Jordan type $(\gl,\gd)$, with
 either $\gl=\gd,$ so that $a\in C(A),$ or $\gl+\gd = 1$
($\gl\ne\gd$) and $x^2 = 0$ for all $x \in
 A_{\gl,\gd }.$
\item
 Assume that both $a$ and $b$ are not in $C(A).$
Then either $ab=0$ or the subalgebra generated by $a,b$ is one of the
two examples as in Examples
\ref{flex}.
\end{enumerate}

\section{Basic properties of idempotents}\label{id}

 In the following four lemmas (except in part (ii) of Lemma
\ref{acationscom}), one need not assume characteristic $\ne 2$.

\begin{lemma}\label{deg7}
If $a$ is a
flexible idempotent, then $L_a ^2 -L_a =  R_a ^2 -R_a.$
\end{lemma}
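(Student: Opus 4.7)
The plan is to extract the identity $L_a^2 - L_a = R_a^2 - R_a$ directly from the flexibility condition at $a$, namely $(xa)x = x(ax)$ for all $x\in A$, by polarizing in $x$ and then specializing the new variable to $a$ itself. Note that the hypothesis that $a$ is a \emph{flexible} idempotent gives us this identity for every $x$, even though $A$ need not be flexible as an algebra; that is exactly the right strength for the argument.

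First I would replace $x$ by $x+y$ in $(xa)x = x(ax)$ and expand both sides using bilinearity of the product. Four of the resulting terms cancel against the two original instances $(xa)x=x(ax)$ and $(ya)y=y(ay)$, leaving the linearized identity
\[
(xa)y + (ya)x \;=\; x(ay) + y(ax)
\]
for all $x,y\in A$. No division is required in this step, so the argument is valid in any characteristic, consistent with the remark preceding the lemma.

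Next I would specialize $y = a$. The left-hand side becomes $(xa)a + (a^2)x$ and the right-hand side becomes $x(a^2) + a(ax)$. Using $a^2 = a$ this simplifies to
\[
(xa)a + ax \;=\; xa + a(ax),
\]
which rearranges to $(xa)a - xa \;=\; a(ax) - ax$, i.e.\ $R_a^2(x) - R_a(x) = L_a^2(x) - L_a(x)$ for every $x\in A$. Since $x$ was arbitrary, this is the operator identity $L_a^2 - L_a = R_a^2 - R_a$.

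There is really no obstacle here: the second flexible-idempotent hypothesis $L_aR_a = R_aL_a$ is not needed for this lemma, and the proof avoids any denominators. The only thing to watch is that one performs the linearization correctly (subtracting the two un-polarized identities rather than attempting to divide by $2$), so that the conclusion is valid without any restriction on $\operatorname{char}(F)$.
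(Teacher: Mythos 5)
Your proof is correct and is essentially the paper's own argument: the paper substitutes $a+x$ directly into $(ya)y=y(ay)$ and cancels, which is exactly your linearization followed by the specialization $y=a$. Your observations that neither $L_aR_a=R_aL_a$ nor any characteristic assumption is needed are also accurate and consistent with the paper's remark preceding the lemma.
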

\begin{proof} We apply the definition twice:  \begin{gather*}
a +ax + (xa)a + xax =  (a+xa)(a+x) = ((a+x)a)(a+x) =\\
(a+x)(a(a+x)) =  (a+x)(a+ax) =a+a(ax) + xa + xax,
\end{gather*} so $ax + (xa)a =a(ax) + xa$.
\end{proof}

\begin{lemma}\label{acationscom20}
Suppose  $a\in A$ is an idempotent satisfying
 $ L_a^2 -    L_a =R_a^2   - R_a  .$  (In particular this holds when $a$ is a flexible idempotent, by
 Lemma~\ref{deg7}.)
 If $z$ is a  $(\gl,\delta)$-eigenvector,
then either $\delta =\gl$    or $\delta =1-\gl.$
\end{lemma}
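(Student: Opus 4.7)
The plan is to simply evaluate the given operator identity $L_a^2 - L_a = R_a^2 - R_a$ on the eigenvector $z$ and extract a scalar equation relating $\lambda$ and $\delta$.

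Since $z$ is a $(\lambda,\delta)$-eigenvector, $L_a z = \lambda z$ and $R_a z = \delta z$, so $L_a^2 z = \lambda^2 z$ and $R_a^2 z = \delta^2 z$. Applying both sides of $L_a^2 - L_a = R_a^2 - R_a$ to $z$ yields
\[
(\lambda^2 - \lambda)\, z \;=\; (\delta^2 - \delta)\, z.
\]
Assuming $z \neq 0$ (otherwise there is nothing to prove), we get the scalar identity $\lambda^2 - \lambda = \delta^2 - \delta$.

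Rewriting this as $\lambda^2 - \delta^2 = \lambda - \delta$ and factoring gives
\[
(\lambda - \delta)(\lambda + \delta - 1) = 0,
\]
so either $\lambda = \delta$ or $\lambda + \delta = 1$, i.e., $\delta = 1 - \lambda$. This completes the argument.

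There is no real obstacle here; the lemma is an immediate consequence of Lemma \ref{deg7} (or the hypothesis directly) combined with the definition of a $(\lambda,\delta)$-eigenvector. The only mild care is noting that we use that the polynomial $t^2 - t$ takes the same value on $\lambda$ and $\delta$, which factors nicely.
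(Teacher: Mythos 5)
Your proof is correct and is essentially identical to the paper's: both apply the identity $L_a^2 - L_a = R_a^2 - R_a$ to $z$ to get $(\gl^2-\gl)z = (\delta^2-\delta)z$ and then factor $(\gl-\delta)(\gl+\delta-1)=0$. Nothing further is needed.
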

\begin{proof}
   $(\gl^2 - \gl)z = a(az) - az = (za)a-za= (\delta^2 -
\delta)z$, implying $\gl^2 - \gl=\delta^2 - \delta,$ or $(\gl -
\delta)(\gl + \delta - 1) = 0.$
\end{proof}
\smallskip

\begin{lemma}\label{acationscom}
Suppose  $a\in A$ is a given flexible idempotent.
\begin{enumerate}\eroman
\item  $R_a(R_a+L_a-1)=L_a(R_a+L_a-1).$

\item
If  $A$ is power-associative over a field of characteristic $\ne
2,3,$ then
\[
(X_a - 1)  Y_a (L_a + R_a -1) = 0,\text{ for }X,Y\in \{R,L\}.
\]
\end{enumerate}
\end{lemma}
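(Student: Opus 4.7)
The plan is to handle part (i) by a short operator identity. Since $a$ is a flexible idempotent, Definition \ref{defs main}(4) gives us $L_aR_a=R_aL_a$, and Lemma~\ref{deg7} gives $L_a^2-L_a=R_a^2-R_a$. Expanding
\[
R_a(R_a+L_a-1)-L_a(R_a+L_a-1)=R_a^2+R_aL_a-R_a-L_aR_a-L_a^2+L_a,
\]
the middle two terms cancel by commutativity of $L_a$ and $R_a$, and the remaining four terms regroup as $(R_a^2-R_a)-(L_a^2-L_a)$, which is zero by Lemma~\ref{deg7}. This is the only thing (i) requires.

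For part (ii), the plan is to apply the operator in question to an arbitrary $z\in A$ after decomposing $z$ via Albert's Peirce decomposition, reproved later in the paper as Theorem~\ref{axi}(i):
\[
A=A_{1,1}\oplus A_{0,0}\oplus \mathring{A}_{1/2}.
\]
Write $z=z_{1,1}+z_{0,0}+z_{1/2}$ with the obvious subscripts. The key observation is that $L_a+R_a-1$ acts very simply on each summand: on $z_{1,1}$ it gives $z_{1,1}+z_{1,1}-z_{1,1}=z_{1,1}$; on $z_{0,0}$ it gives $-z_{0,0}$; and on $z_{1/2}$ it gives $0$, by the very definition of $\mathring{A}_{1/2}$.

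Thus $(L_a+R_a-1)z=z_{1,1}-z_{0,0}$. Applying $Y_a$ for $Y\in\{L,R\}$ annihilates the $A_{0,0}$-component (since $az_{0,0}=z_{0,0}a=0$) and fixes the $A_{1,1}$-component (since $az_{1,1}=z_{1,1}a=z_{1,1}$), yielding $Y_a(z_{1,1}-z_{0,0})=z_{1,1}$. Then $(X_a-1)z_{1,1}=z_{1,1}-z_{1,1}=0$, as needed. The main (minor) obstacle is really just citing the right decomposition; the power-associativity and characteristic hypothesis enter only through Albert's theorem, which is exactly what legitimates the three-term Peirce decomposition making $L_a+R_a-1$ act diagonally.
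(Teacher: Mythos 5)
Part (i) of your proposal is the paper's own argument: expand both sides, cancel $R_aL_a$ against $L_aR_a$ using the commutation built into Definition~\ref{defs main}(4), and absorb the rest into Lemma~\ref{deg7}. Nothing to add there.

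Part (ii) is where you diverge, and the computation itself is clean and correct: on each summand of $A=A_{1,1}\oplus A_{0,0}\oplus \mathring{A}_{1/2}$ the operator $L_a+R_a-1$ acts as $+1$, $-1$, $0$ respectively, $Y_a$ then kills the $A_{0,0}$-piece and fixes the $A_{1,1}$-piece, and $X_a-1$ annihilates what is left. The problem is the source you cite for the decomposition. Within this paper, Theorem~\ref{axi}(i) is \emph{proved from} Lemma~\ref{acationscom}(ii): the paper's proof begins by splitting $x=R_ax+(1-R_a)x$ and invoking precisely the identities $(X_a-1)Y_a(L_a+R_a-1)=0$ to produce the three components. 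So deducing (ii) from Theorem~\ref{axi}(i) as ``reproved later in the paper'' is circular. The argument can be rescued by citing Albert's original proof of the decomposition in \cite{A} (it is his theorem, and he proves it there without this lemma), but then you are importing the hard result as a black box, which is exactly what the paper is structured to avoid: it derives (ii) by a direct operator computation from \cite[Equation 12, p.~556]{A} together with Lemma~\ref{deg7} and the commutativity of $L_a$ and $R_a$, obtaining $(R_a-1)R_a(L_a+R_a-1)=0$ first and the remaining cases from (i) --- precisely so that the Peirce decomposition can afterwards be given a self-contained proof. If you want a proof that fits the paper's logical order, you should reproduce that operator manipulation (or some substitute for Albert's Equation 12) rather than appeal to the decomposition.

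Two smaller remarks. First, your argument, if granted the decomposition, never uses flexibility of $a$ in part (ii); that is not an error, but it signals that you are proving the statement from a strictly stronger input than the paper uses. Second, "Albert's theorem" should be pinned down to the external reference \cite{A} explicitly if you keep this route, since the in-paper citation is the circular one.
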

\begin{proof}

(i) By Lemma~\ref{deg7},
\begin{gather*}
R_a(R_a+L_a-1)=R_aL_a+R_a^2-R_a=R_aL_a+L_a^2-L_a\\
=L_a(R_a+L_a-1).
\end{gather*}

(ii) We show that $(R_a - 1)  R_a (L_a + R_a -1) = 0,$ the rest
follows from (i).

By \cite[Equation 12, p. 556]{A}, and applying Lemma~\ref{deg7},
 \begin{equation}\begin{aligned}  0 & = R_a^2 + L_a R_a+ L_a^2 - R_a^3 -L_a R_a^2 -L_a \\
 & = R_a^2  + L_a R_a + R_a^2 - R_a^3 -L_a R_a^2 -R_a  \\ &= 2
R_a^2 - R_a^3 +L_a (R_a -R_a^2) -R_a ,\end{aligned}\end{equation}

so
\[(R_a - 1)^2 R_a =  R_a^3 - 2 R_a^2 +R_a  = L_a (R_a -R_a^2) =
- L_a R_a  (R_a -1) ,
\]
so
\[
(R_a - 1)  R_a (L_a + R_a -1) = 0.\qedhere
\]
\end{proof}

\begin{thm}\label{id1}$ $
\begin{enumerate}
\item If $a,a'$ are orthogonal idempotents in $A,$
then they are orthogonal in~$A^{(\circ)}$.

\item Any primitive idempotent of $A^{(\circ)}$  is
a primitive idempotent of  $A$.

 \item Suppose that $A$ is flexible, and let $a$ be an absolutely primitive idempotent of
 $A^{(\circ)}.$ Let $d\in A$ such that $a d=d.$ Then
\begin{itemize}
\item[(i)]
$d a=\ga a,$ and $d^2=\ga d,$ for some $\ga\in F.$
\item[(ii)]
If $\ga=0,$ then $d$ is a $(1,0)$-eigenvector of $a,$ so $a\circ d=\half d,$ and if
$\ga\ne 0,$ then $(a-\ga^{-1}d)$ is a $(1,0)$-eigenvector of $a,$ so $a\circ (a-\ga^{-1}d)=\half (a-\ga^{-1}d).$
\end{itemize}
\end{enumerate}
\end{thm}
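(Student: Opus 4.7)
Parts (1) and (2) come quickly. For (1), I compute $a\circ a' = \half(aa'+a'a)=0$, and similarly $a'\circ a=0$. For (2), I note that $a\circ a = a^2$, so idempotents in $A$ and $A^{(\circ)}$ coincide. By part (1), any decomposition of $a$ as a sum of nontrivial orthogonal idempotents in $A$ also gives such a decomposition in $A^{(\circ)}$, so the contrapositive yields the claim.

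The heart of the theorem is part (3)(i). Assuming $ad=d$, my plan is to first show that $da$ lies in the $1$-eigenspace of $L_a^{(\circ)}$, hence in $Fa$ by absolute primitivity in $A^{(\circ)}$. Concretely, I will establish both $L_a(da)=da$ and $R_a(da)=da$. For the second, I apply Lemma~\ref{deg7} to $d$: since $L_a(d)=d$ forces $L_a^2(d)=L_a(d)$, the lemma yields $R_a^2(d)=R_a(d)$, that is, $(da)a=da$. For the first, I apply flexibility $(xy)x=x(yx)$ with $x=a$ and $y=d$, obtaining $(ad)a=a(da)$, that is, $da=a(da)$. Averaging gives $a\circ(da)=da$, so absolute primitivity of $a$ in $A^{(\circ)}$ yields $da=\alpha a$ for some $\alpha\in F$.

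For $d^2=\alpha d$, I apply flexibility $(xy)x=x(yx)$ once more, now with $x=d$ and $y=a$: the identity $(da)d=d(ad)$ has left side $(\alpha a)d=\alpha d$ and right side $d^2$. For part (ii) I verify the eigenvalue equations directly. When $\alpha=0$, we already have $ad=d$ and $da=0$, so $d$ is itself a $(1,0)$-eigenvector and $a\circ d=\half d$. When $\alpha\ne 0$, setting $e:=a-\alpha^{-1}d$, substitution gives $ae=a-\alpha^{-1}d=e$ and $ea=a-\alpha^{-1}(\alpha a)=0$, whence $a\circ e=\half e$.

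The one mild obstacle I anticipate is recognizing that neither $L_a(da)=da$ nor $R_a(da)=da$ alone forces $da\in Fa$, since one-sided absolute primitivity in $A$ is \emph{not} assumed; the trick is to combine them into $L_a^{(\circ)}(da)=da$ and invoke the given absolute primitivity in the commutative algebra $A^{(\circ)}$. Once this is in hand, the remaining identities fall out from flexibility together with the formulas $ad=d$ and $da=\alpha a$.
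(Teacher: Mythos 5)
Your proof is correct and follows essentially the same route as the paper: parts (1) and (2) by the same direct computation and contrapositive, and part (3) by combining Lemma~\ref{deg7} (giving $(da)a=da$) with flexibility (giving $a(da)=(ad)a=da$) to conclude $a\circ(da)=da$, then invoking absolute primitivity in $A^{(\circ)}$ and finishing with $(da)d=d(ad)$. The only difference is cosmetic — you spell out the step $a(da)=da$ that the paper leaves implicit in writing $ada$.
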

\begin{proof}
(1)  This is obvious.

(2) If $a$ is a sum $a' + a''$ of orthogonal idempotents in $A$ then
$a$ is a sum $a' + a''$ of orthogonal idempotents in $A^{(\circ)}$,
by (1).

(3) (i)\ By Lemma \ref{deg7}, $(da)a-da = a(ad)-ad=0.$ Hence $a
\circ da = \frac 12 (ada + (da)a) = da$, so by hypothesis $da \in
Fa$. Writing $da = \ga a$, we have $d^2 = d(ad) = (da)d = \ga a\cdot
d = \ga d.$

\indent (ii)\  This is an easy computation.
\end{proof}

Note  that  Cases (i) and (ii) can arise in matrix algebras,
respectively, with $a
 = e_{11}$ and $d = e_{12},$ or  $a
 = e_{11}+e_{12} $ and $d = e_{11}.$

\begin{remark}
Any algebra $B$ generated by two elements $a, v$ such that $a^2=a,\
av=v,\ va=\ga a,$ and $v^2=\ga v$ with $\ga\in F,$ is flexible and
satisfies $A_1(L_a)=B$ and $A_1(R_a)=Fa.$
\end{remark}

\subsection{Semisimple idempotents}\label{nc11}$ $

\begin{Def}\label{defse}
An idempotent $a$ is {\it left semisimple} of degree $t$ if $$\prod
_ {i=1}^t (L_a -\gl _i ) = 0$$ for suitable distinct eigenvalues
$\gl_1, \dots, \gl_t.$

The idempotent is {\it  semisimple} if it is both left  and right
semisimple.
\end{Def}

\begin{prop}\label{prop 2.7}
Suppose $a$ is a left semisimple idempotent   of degree~$t$. For $y
\in A$, write $y = \sum y_j$ where $y_j$ is the left
$\gl_j$-eigenvector in the left eigenvector decomposition of $y$,
and define the vector space $V_a(y) = \sum _{i=0}^{t-1} \ff\L_a^iy.$
Then
\[V_a(y) =
\oplus_{j=1}^t \ff y_j.
\]
\end{prop}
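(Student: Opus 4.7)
The plan is to use the fact that a left semisimple idempotent of degree $t$ has annihilating polynomial $\prod_{i=1}^t(L_a-\lambda_i)$ with distinct roots, which forces $L_a$ to be diagonalizable on $A$ and gives a direct sum decomposition $A=\bigoplus_{j=1}^t A_{\lambda_j}(L_a)$. Consequently, every $y\in A$ admits a unique expression $y=\sum_{j=1}^t y_j$ with $y_j\in A_{\lambda_j}(L_a)$, so the sum $\bigoplus_{j=1}^t Fy_j$ is indeed a direct sum inside $A$ (with the convention that zero summands are discarded).

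Next I would compute $L_a^i y=\sum_{j=1}^t \lambda_j^{\,i}\,y_j$ for $0\le i\le t-1$. This immediately shows $L_a^iy\in\bigoplus_{j=1}^t Fy_j$, hence $V_a(y)\subseteq\bigoplus_{j=1}^t Fy_j$. For the reverse inclusion, I would observe that the coefficient matrix $M=(\lambda_j^{\,i})_{0\le i\le t-1,\,1\le j\le t}$ expressing $(y,L_ay,\ldots,L_a^{t-1}y)^{\mathsf{T}}$ in terms of $(y_1,\ldots,y_t)^{\mathsf{T}}$ is a Vandermonde matrix in the distinct scalars $\lambda_1,\ldots,\lambda_t$, so it is invertible over $F$. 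Applying $M^{-1}$ gives an explicit expression
\[
y_j=\sum_{i=0}^{t-1}c_{ji}L_a^iy\in V_a(y),\qquad c_{ji}\in F,
\]
for each $j$, which yields $\bigoplus_{j=1}^t Fy_j\subseteq V_a(y)$ and finishes the proof.

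I do not expect a real obstacle here; the argument is a straightforward Vandermonde/interpolation computation once one knows that a polynomial with distinct roots annihilating $L_a$ forces diagonalizability. The only technical point worth double-checking is that the polynomial identity $\prod(L_a-\lambda_i)=0$ on the (possibly nonassociative) algebra $A$ is indeed an identity of the linear operator $L_a\colon A\to A$, which is exactly the content of Definition~\ref{defse}, so no extra algebraic structure on $A$ is needed.
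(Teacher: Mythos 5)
Your proof is correct and follows essentially the same route as the paper's: compute $L_a^iy=\sum_j\lambda_j^iy_j$, observe one inclusion, and invert the Vandermonde matrix in the distinct $\lambda_j$ to recover each $y_j$ from $V_a(y)$. The only difference is that you spell out the preliminary point that the annihilating polynomial with distinct roots makes $L_a$ diagonalizable, which the paper's statement takes for granted.
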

\begin{proof}
By induction, $L_a^k (y) = \sum \gl_j^k y_j$ for $0 \le k \le t-1.$
Thus $$V_a(y) \subseteq \oplus_{j=1}^t \ff y_j.$$ But on the other
hand, we have a system of $t$ equations in the $y_j$ with
coefficients $( \gl_j^k)$, the Vandermonde matrix, which is
nonsingular, so there is a unique solution for the $y_j$ in $V_a(y)$.
\end{proof}

This space $V_a(b)$ plays a key  role for an idempotent $b$. Here is
another useful general result.

\begin{prop}\label{axi2}  If $a$  is a semisimple  idempotent of a flexible algebra
$A$, then $A$ decomposes as $\oplus _{\gl} A_{\gl,\gl} \oplus
A_{\gl,1-\gl},$ summed over all left eigenvalues $\gl$ of~$a$.
\end{prop}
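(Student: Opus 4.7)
The plan is to simultaneously diagonalize $L_a$ and $R_a$ on $A$, and then use the constraint coming from flexibility to see that only the pairs $(\gl,\gl)$ and $(\gl,1-\gl)$ of joint eigenvalues can actually occur.

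First I would extract commutativity of $L_a$ and $R_a$ from the flexibility identity: setting $x = a$ in $(xy)x = x(yx)$ gives $(ay)a = a(ya)$ for every $y \in A$, which is exactly the statement $R_a L_a = L_a R_a$. Since $a$ is semisimple, both $L_a$ and $R_a$ are by definition annihilated by polynomials with distinct linear factors, so each is diagonalizable. Two commuting diagonalizable endomorphisms are simultaneously diagonalizable, so
\[
A \;=\; \bigoplus_{\gl,\gd} A_{\gl,\gd}(a),
\]
where $\gl$ runs over the left eigenvalues and $\gd$ over the right eigenvalues of $a$.

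Next, the hypothesis of Lemma \ref{acationscom20} is satisfied by Lemma \ref{deg7}, so any nonzero $(\gl,\gd)$-eigenvector must have $\gd = \gl$ or $\gd = 1-\gl$. Consequently $A_{\gl,\gd}(a) = 0$ for $\gd \notin \{\gl, 1-\gl\}$, and the double sum collapses to
\[
A \;=\; \bigoplus_{\gl}\bigl(A_{\gl,\gl}(a) \oplus A_{\gl,1-\gl}(a)\bigr),
\]
where $\gl$ ranges over the left eigenvalues of $a$ (with the two summands coinciding in the degenerate case $\gl = \half$).

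The main (and really only) subtlety is noticing that flexibility already forces $L_a R_a = R_a L_a$, so that the separate left and right semisimplicity hypotheses on $a$ upgrade to joint diagonalizability; after that step, the result is a direct application of Lemma \ref{acationscom20}.
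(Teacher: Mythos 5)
Your proof is correct and follows essentially the same route as the paper: the paper's own proof is the one-line observation that Lemma~\ref{acationscom20} rules out all joint eigenvalues other than $(\gl,\gl)$ and $(\gl,1-\gl)$, taking the simultaneous eigenspace decomposition for granted. You merely make explicit the (correct) preliminary step that flexibility forces $L_aR_a=R_aL_a$, so the two semisimple operators are simultaneously diagonalizable.
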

\begin{proof} These are the only possibilities, in view of
Lemma~\ref{acationscom20}.
\end{proof}

\subsection{Idempotents in power-associative algebras}$ $

Albert  \cite{A} proves some amazing results. \cite[Equation (20),
p.~568]{A} implies that if $\charc(F)\ne 2,3,$ then every idempotent
in a commutative power-associative algebra is an axis (see the
Introduction and Definition \ref{defs main3}). Furthermore, the
eigenspaces are independent by \cite[Equation (23), p.~569]{A}.
\cite[Theorem~I.2, p.~559]{A} computes the \textbf{Albert fusion
rules} with respect to a given idempotent $a$:
\begin{enumerate}\eroman
 \item
$A_1A_1 \subseteq  A_1$ and $A_0A_0 \subseteq A_0;$
\item $A_0A_1 = A_1A_0 =
\{ 0 \};$
\item $A_0 A_{1/2} \subseteq  A_{1/2} + A_1$;
\item $A_1A_{1/2} \subseteq  A_{1/2} + A_0$;
\item $A_{1/2}^2 \subseteq  A_0 + A_1.$
 \end{enumerate}

 We stress the role here of power-associativity.

 In Theorems~I.3, I.5 (\cite[pp.~560, 562]{A}), Albert  gets the same
results, except for (v), for (noncommutative) power-associative
algebras with $\charc(F)\ne 2,3$, replacing the eigenspaces
$A_{\gl},$ with the $\circ$-eigenspaces:
\[
\mathring{A}_{\gl}:=\{x\in A\mid ax+xa=2\gl x\}.
\]
(These are $\frakA_a(\gl)$ in the notation of Albert, \cite[Equation
(27), p.~560]{A}.) He furthermore shows that
$\mathring{A}_{\gl}=A_{\gl,\gl}=A_{\gl},$ and that these are
subrings, for $\gl\in\{0,1\}.$

 For Jordan algebras
cf.~\cite[Equation (26), p. 559]{A} Albert improves  his fusion
rules:

\begin{enumerate}\eroman
 \item
$A_1A_1 \subseteq  A_1$ and $A_0A_0 \subseteq A_0;$
\item $A_1A_0 =
\{ 0 \};$
\item $(A_0 + A_1)A_{1/2} \subseteq  A_{1/2}$;
\item $A_{1/2}^2 \subseteq  A_0 + A_1,$
 \end{enumerate}
but \cite[Equation (26), p. 559, l.-2]{A} has a counterexample  to
(iii) in a commutative  power-associative algebra. When (iii) does
hold, Albert calls the algebra  $A$  {\it stable}.

Many of the above results can be extended to the noncommutative
situation.

\begin{remark}\label{not06}
As we mentioned, in  an easy argument in the first three lines of
the proof of \cite[Theorem 3, p.~560]{A} Albert proves  that
$\mathring{A}_{\gl}=A_{\gl,\gl}$ for $\gl\in\{0,1\}.$ This is used
 in the following result  of Albert on power associative
rings (\cite[(22), p.~559]{A} and \cite[Thm.~3, p.~562]{A}).
\end{remark}

\begin{thm}[\cite{A}]\label{axi}
Suppose that $A$  is power-associative,
and  that $\charc(F)\ne 2,3.$
Let $a\in A$ be an idempotent. Then
\begin{enumerate}\eroman
\item
$A=A_{1,1}\oplus A_{0,0}\oplus \mathring{A}_{1/2}.$

\item
We have
\begin{gather*}
A_1 = X_a (L_a+R_a -1)A; \qquad A_0 = (X_a-1) (L_a+R_a-1)A; \\
  \mathring{A}_{1/2} = (R_a+L_a)(R_a+L_a-2)A, \quad  X\in\{R,L\}.
    \end{gather*}
\end{enumerate}
\end{thm}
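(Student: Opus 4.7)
The plan is to pivot on the single operator $S := L_a + R_a$: show that $S$ satisfies the polynomial identity $S(S-1)(S-2) = 0$ on $A$, and then read off the three summands of (i) as the corresponding eigenspaces of $S$. Once that identity is in hand, since $0, 1, 2$ are distinct in $F$ (using $\charc(F)\ne 2,3$), standard linear algebra gives $A = \ker S \oplus \ker(S-1) \oplus \ker(S-2)$. By definition $\ker(S-1) = \mathring{A}_{1/2}$, and by Remark~\ref{not06} we have $\ker(S-2) = \mathring{A}_1 = A_{1,1}$ and $\ker S = \mathring{A}_0 = A_{0,0}$, yielding (i).

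To prove the polynomial identity, I apply Lemma~\ref{acationscom}(ii) with each of the four choices $X,Y\in\{L,R\}$. Setting $T := S - 1$, these read
\[
L_a^2 T = L_a T,\qquad R_a^2 T = R_a T,\qquad L_a R_a T = R_a T,\qquad R_a L_a T = L_a T.
\]
Summing all four gives $(L_a + R_a)^2 T = 2(L_a + R_a) T$, i.e., $S^2 T = 2 S T$, which is precisely $S(S-1)(S-2) = 0$ on $A$.

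For part (ii), I verify the three formulas by computing the action of each listed operator on the three summands. On $A_{1,1}$, both $T$ and $X_a$ act as the identity, so $X_a T$ acts as $1$ while $(X_a-1)T$ and $S(S-2)$ vanish. On $A_{0,0}$, $T$ acts as $-1$ and $X_a$ as $0$, so $(X_a-1)T$ acts as $1$ while the other two vanish. On $\mathring{A}_{1/2}$, $T$ acts as $0$, so the first two vanish, while $S(S-2)$ acts as $-1$. Hence each of the three operators surjects onto the claimed summand.

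The main obstacle is the polynomial identity $S(S-1)(S-2) = 0$; once it is available, all remaining steps are routine linear algebra together with the identifications $A_\lambda = \mathring{A}_\lambda = A_{\lambda,\lambda}$ for $\lambda\in\{0,1\}$ recorded in Remark~\ref{not06}. The identity itself rests on Lemma~\ref{acationscom}(ii), whose proof combines Albert's equation (12) with the flexibility-derived identity $L_a^2 - L_a = R_a^2 - R_a$ of Lemma~\ref{deg7}.
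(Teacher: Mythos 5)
Your reduction to the single operator $S=L_a+R_a$ and the identity $S(S-1)(S-2)=0$ is a clean way to organize the argument, and the ensuing linear algebra, the identifications via Remark~\ref{not06}, and your verification of part (ii) are all fine. But there is a genuine gap at the one step you yourself single out as the crux: Theorem~\ref{axi} assumes only that $A$ is power-associative, whereas Lemma~\ref{acationscom}(ii) is stated and proved only for a \emph{flexible} idempotent $a$ (its proof substitutes $L_a^2-L_a=R_a^2-R_a$ from Lemma~\ref{deg7}, which is exactly where flexibility enters). An idempotent of a general power-associative algebra need not be flexible, so you cannot invoke the four identities $(X_a-1)Y_a(L_a+R_a-1)=0$ in $A$ itself. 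As written, your argument establishes the theorem only under an additional flexibility hypothesis that Albert's theorem does not require.

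The repair is precisely the device the paper uses: pass to $A^{(\circ)}$, which is commutative (hence flexible, so $a$ is automatically a flexible idempotent there) and inherits power-associativity from $A$; in $A^{(\circ)}$ one has $L_a^{\circ}=R_a^{\circ}=\tfrac{1}{2}S$, and Lemma~\ref{acationscom}(ii) applied there yields $(\tfrac{1}{2}S-1)(\tfrac{1}{2}S)(S-1)=0$, which is your identity $S(S-1)(S-2)=0$ on the underlying space. From that point on, your eigenspace decomposition and part (ii) go through unchanged. The paper's own proof handles the commutative case by an explicit splitting of $x$ via $R_a$ and $1-R_a$ rather than by the minimal-polynomial argument, but in substance your route, once routed through $A^{(\circ)}$, coincides with it.
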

Here is an alternate proof that is rather conceptual, working
directly in~$A.$
\begin{proof}
(i)\ Assume first that $A$ is commutative.  Then $A$ is flexible (see equation \eqref{flex1}),
so we may use Lemma~\ref{acationscom}.

Let $x_1=R_ax$ and $x_2=(1-R_a)x,$ so that $x=x_1+x_2.$ By
Lemma~\ref{acationscom}(ii) we have
\begin{equation}\label{eq 3.10(1)}
(R_a-1)(R_a+L_a-1)(x_1)=0=(L_a-1)(R_a+L_a-1)(x_1).
\end{equation}
and
\begin{equation}\label{eq 3.10(2)}
R_a(R_a+L_a-1)(x_2)=0=L_a(R_a+L_a-1)(x_2).
\end{equation}
Let
\begin{alignat*}{3}
x_1'&=(R_a+L_a-1)(x_1), &\qquad\qquad & x_1''=(R_a+L_a-2)(x_1),\\
x_2'&=(R_a+L_a-1)(x_2), &\qquad\qquad & x_2''=(R_a+L_a)(x_2),
\end{alignat*}
Of course
\[
x=x_1'-x_1''+x_2''-x_2'.
\]
By Equations \eqref{eq 3.10(1)} and \eqref{eq 3.10(2)},
\[
R_a(x_1')=x_1'=L_a(x_1'),\qquad R_a(x_2')=0=L_a(x_2').
\]
So
\[
x_1'\in A_1,\qquad\text{and}\qquad x_2'\in A_0.
\]
Also by Equation \eqref{eq 3.10(1)}, $(R_a+L_a-1)(x_1'')=0,$ that is
$ x_1''\in \mathring{A}_{1/2}, $ and by Equation \eqref{eq 3.10(2)},
$(R_a+L_a-1)(x_2'')=0,$ that is $ x_2''\in \mathring{A}_{1/2}.$

For the general case where $A$ is not commutative, $\mathring{A}$ is
commutative and power associative; hence
$A=\mathring{A}_1\oplus\mathring{A}_0\oplus \mathring{A}_{1/2}.$
However, $\mathring{A}_{\gl}=A_{\gl,\gl},$ for $\gl\in\{0,1\},$ by
Remark~\ref{not06}.
\medskip

\noindent (ii)\ follows from (i). Indeed
\[
X_a (L_a+R_a -1)A_0=0=X_a (L_a+R_a -1)\mathring{A}_{1/2},
\]
and $X_a (L_a+R_a -1)(x)=x,$ for $x\in A_1,$ and similarly for
$(X_a-1) (L_a+R_a -1)$ and $(X_a+L_a)(R_a+L_a-2).$
\end{proof}

Note that if $A$ is commutative then $\mathring{A}_{1/2} =A_{1/2} ,$ so
Theorem~\ref{axi} generalizes the commutative description of
idempotents.
\subsubsection{Application to noncommutative Jordan algebras}$ $
\medskip

\begin{Def}(\cite{Sch})\label{defs main2}
    An algebra $A$ is   {\it noncommutative Jordan} if
$A$ is flexible and satisfies the identity
\begin{equation}\label{noncom}
(x ^2w)x = x^2(wx).
\end{equation}
\end{Def}

Schafer~\cite[p.~473]{Sch} showed that any  noncommutative Jordan
algebra with $\e$ of characteristic $\ne 2$  satisfying
\eqref{noncom} is flexible and power-associative.
 Hence, Albert's Fusion Theorem applies to
noncommutative Jordan algebras with $\e$.

\section{Basic properties of axes }\label{nc1}
In this section we generalize certain notions from the theory of
commutative axial algebras in \cite{HRS2}, to the noncommutative
setting.

We start with some examples of axes that are not flexible.

\begin{examples}\label{notJJ}$ $
\begin{enumerate} \eroman
\item
Let $A = \ff a + \ff b +\ff c$ with multiplication given by
\[
 \begin{aligned} &
a^2 = a,  \qquad b^2 = a = c^2,\\ & ab = \gl b, \qquad ba = 0 ,
\\ & ac = 0, \qquad ca = \gd c, \qquad bc =   cb = 0.
\end{aligned}
 \]
 Then
 $a(ba) = 0 = (ab)a$, and   $a(ca) =  0= (ac)a,$ so $L_a R_a = R_a L_a.$ $$(L_a - \gl )b =
 \gl
 (b -b) =0 = L_a c ,$$ implying $(L_a-\gl)(L_a-1)L_a =0.$
Likewise for $R_a.$

Hence $a$  is an axis of  type $(\gl,\gd)$ which is not Jordan.
It is the only nonzero axis in $A$.

\item
Let $0 \ne \gl,\gd\in \ff,$ with $\gl+\gd = 1.$
Let
\[
A=Fa\oplus Fx\oplus Fy\oplus Fy'\oplus Fz,
\]
with multiplication defined by:
\begin{gather*}
a^2=a,\quad ax=xa=0,\quad ay=\gl y, ya=0,\quad ay'=0, y'a=\gd y',\\
az=\gl z,za=\gd z.
\end{gather*}
\begin{gather*}
x^2=xy=yx=xy'=y'x= xz=zx=0.\\
y^2=-x,\quad yy'=0=y'y,\quad yz=\gl y', zy=0.\\
(y')^2=x,\quad y'z=\gd y, zy'=0.\qquad z^2=0.
\end{gather*}
So
\[
A_{1,1}=Fa,\quad A_{0,0}=Fx,\quad A_{\gl,0}=Fy,\quad A_{0,\gd}=Fy',\quad A_{\gl,\gd}=Fz.
\]
It is easy to check that $(av)a=a(va),$ for $v\in\{a,x,y,y',z\},$
and $a$ is an axis of type $(\gl,\gd).$ Of course $a$ is not of Jordan type.
Also, $a$ is not a flexible axis since $(ya)y=0,$ while $y(ay)=\gl y^2=-\gl x.$

Let $b = a + x+y+y'+z.$ Then
\[
\begin{aligned}  b^2 = & a^2 +y^2  +(y')^2 +ay +y'a + az +za +yz + y'z  \\
&= a - x+x +(\gl + \gd) y  + (\gl + \gd) y' + (\gl + \gd)z= b,
\end{aligned}
\]
(We conjecture that by solving some more equations $b$ could also be
made into an axis.)
Note that $A$ has dimension  5. Since $a,b,ab,ba,$ and $aba$ are
independent, they span $A$.
\end{enumerate}
\end{examples}

\begin{remark}\label{not00}
Write $y= y_0 + \ga_y  a+ y_{\gl},$ for $y _\rho \in A_{\rho}$,
$\rho \in \{0,\gl\}$, and $\ga \in \ff$. In this notation,
$y_0 = y_{0,0}+y_{0,\gd},$ and $y_\gl =
y_{\gl,0}+y_{\gl,\gd}.$ When $a$ has Jordan type $(\gl,\gd)$, then
$y_0 = y_{0,0}$ and $y_\gl = y_{\gl,\gd}.$
\end{remark}

\begin{lemma}\label{lem 3.3}$ $
Suppose  $a$ is an axis of type $(\gl,\gd).$
\begin{enumerate}
\item $a(A_{0} + \ff a)= \ff a .$

\item   $ay=\ga_y  a+\gl y_{\gl},$ implying
$$ y_{\gl} =\frac 1 {\gl}
 (ay -\ga_y  a).$$

\item $
a(ay)=\ga_y (1-\gl) a+\gl ay.$
\item $ y_0 =  y -\frac 1 \gl
 (ay -(\gl +1)\ga_y  a)  .$

 \item  $
(ay)a =\ga_y  a+\gl \gd y_{\gl,\gd},$ implying $$  y_{\gl,\gd}
=\frac 1{\gl \gd}
 ((ay)a -\ga_y  a).$$
\end{enumerate}
\end{lemma}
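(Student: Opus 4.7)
The strategy for this lemma is to exploit the $\zz_2 \times \zz_2$ grading guaranteed by Definition \ref{defs main3}(3). For any $y \in A$ I would begin with the 2-sided decomposition
$$y = \ga_y a + y_{0,0} + y_{0,\gd} + y_{\gl,0} + y_{\gl,\gd},$$
so that $y_0 = y_{0,0} + y_{0,\gd}$ lies in $A_0(L_a)$ and $y_\gl = y_{\gl,0} + y_{\gl,\gd}$ lies in $A_\gl(L_a)$, with $L_a$ acting as $0,\gl,1$ and $R_a$ acting as $0,\gd,1$ on the respective eigenspaces, in accordance with Remark \ref{not00}. From here every part of the lemma will follow by applying $L_a$ and $R_a$ to this decomposition and reading off coefficients.

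Part (1) is immediate: $L_a$ kills $A_0$ and fixes $a$, so $a(A_0 + \ff a) = \ff a$. For part (2), applying $L_a$ to the decomposition gives $ay = \ga_y a + \gl y_\gl$ at once, because the $A_{0,*}$ components are annihilated and $y_\gl$ is scaled by $\gl$; rearranging yields the formula for $y_\gl$. Part (3) is obtained either by applying $L_a$ a second time to the equation in (2), or by using $a\cdot y_\gl = \gl y_\gl$ to evaluate $a(ay) = \ga_y a + \gl^2 y_\gl$ and then rewriting $\gl^2 y_\gl$ in terms of $ay$ via part (2). Part (4) is pure substitution: write $y_0 = y - \ga_y a - y_\gl$ and plug in the expression for $y_\gl$ from (2).

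For part (5) the point is that $R_a$ also acts diagonally with respect to the grading: $R_a$ annihilates $A_{\gl,0}$ and scales $A_{\gl,\gd}$ by $\gd$. Right-multiplying the formula from (2) by $a$ gives
$$(ay)a = \ga_y a^2 + \gl(y_\gl a) = \ga_y a + \gl\gd\, y_{\gl,\gd},$$
from which the claimed inversion follows. (Alternatively, one can invoke the commutativity $L_a R_a = R_a L_a$ built into the axis definition to interpret $(ay)a$ as $R_a L_a y$, but the direct calculation above suffices.)

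No serious obstacle is anticipated; every step is a bookkeeping consequence of the eigenspace decomposition. The only point deserving a moment's attention is that the formulas require dividing by $\gl$ and by $\gl\gd$, which is legal because Definition \ref{defs main3} excludes $\gl,\gd \in \{0,1\}$.
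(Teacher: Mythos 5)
Your proposal is correct and follows essentially the same route as the paper: decompose $y$ into its eigencomponents, apply $L_a$ (and for part (5), $R_a$) to read off coefficients, and note that $y_\gl a = \gd\, y_{\gl,\gd}$ because $R_a$ kills the $A_{\gl,0}$ component. The observation that divisibility by $\gl$ and $\gl\gd$ is guaranteed by $\gl,\gd\notin\{0,1\}$ is a fine finishing touch, though the paper leaves it implicit.
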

\begin{proof} (1) If $x_0 \in A_0$ then $a x_0 = 0,$ so $a (A_0+A_1)
= 0+ a \ff a = \ff a^2 =  \ff a.$

(2) Apply $L_a$ to  Remark~\ref{not00}.

(3) $ a(ay)=\ga_y  a+\gl^2 y_{\gl}=\ga_y  a+\gl \textstyle{(ay-\ga_y
a)},$  yielding (3).

 (4) $y_0 = y - y_\gl - \ga_y  a = y -\frac
1 \gl
 (ay -(\gl +1)\ga_y  a).$

 (5) Apply $R_a$ to (2).
\end{proof}

\begin{prop}\label{prop 3.4}$ $
\begin{enumerate}\eroman
\item If $a$ is a   left axis of type $\gl$ then the left decomposition $$x = x_{1} +
x_{0} + x_{\gl}$$ of $x\in A$ with respect to $a,$ is given by:
 \begin{itemize}
\item $x_{1} = \frac 1{\gl -1}(a(a-\gl))x = \ga_x a, \ \ga_x \in \ff,$
\item $x_{0} =  -\frac 1{\gl}((a-\gl)(a-1))x \in A_0(L_a)$,
\item $x_{\gl} =  -\frac 1{\gl(\gl -1)}(a(a-1))x  \in A_{\gl}(L_a)$.
 \end{itemize}
 \item If $a$ is a    right axis of type $\gd$ then we have an analogous further right decomposition $$x_\rho = x_{\rho,1} +
x_{\rho,0} + x_{\rho,\gd}$$ of $x_\rho \in A_\rho,$ (for $\rho\in
\{0,1,\gl\}$ with respect to $a,$ given by:
 \begin{itemize}
\item $x_{\rho,1} = \frac 1{\gd -1}x_\rho(a(a-\gd)) = \ga_{x_\rho} a, \ \ga_{x_\rho} \in \ff,$
\item $x_{\rho,0} =  -\frac 1{\gd}x_\rho((a-\gd)(a-1)) \in A_0(R_a),$
\item $x_{\rho,\gd} =  -\frac 1{\gd(\gd -1)}x_\rho(a(a-1))  \in A_{\gd}(R_a).$
 \end{itemize}

\item  An idempotent $b$ cannot be a left eigenvector of a left axis $a\ne b$ unless
$ab =0.$

\item
If $a$ is an axis of  type $(\gl,\gd),$ then $A$ decomposes into a
direct sum
\[
A=\overbrace{A_{1,1} \oplus A_{0,0}}^{\text {$++$-part}}\oplus
\overbrace{A_{0,\gd}}^{\text{$+-$-part}} \oplus \overbrace{
A_{\la,0}}^{\text {$-+$-part}}\oplus \overbrace{A_{\gl,\gd
}}^{\text{$--$-part}},
\]
and this is a $\mathbb Z_2\times \mathbb Z_2$-grading of $A.$ In
particular $ A_{0,1}= A_{\gl,1}= A_{1,0}= A_{1,\gd}= 0.$

\item
If $a$ is an axis of Jordan type $(\gl,\gd)$ then
\[
A=\overbrace{A_{1,1} \oplus A_{0,0}}^{\text {$+$-part}}\oplus
\overbrace{A_{\gl,\gd }}^{\text{$-$-part}},
\]
is a $\zz_2$-grading of $A.$
\end{enumerate}
\end{prop}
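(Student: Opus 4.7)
The proof splits naturally into three groups.

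For (i) and (ii), the strategy is Lagrange interpolation. The left axis definition gives the annihilating polynomial $t(t-1)(t-\gl)$ of $L_a$ with distinct roots $0,1,\gl$, together with the direct-sum decomposition $A=A_1\oplus A_0\oplus A_\gl$ into the corresponding eigenspaces. The projection onto the $\mu$-eigenspace is therefore the Lagrange polynomial $e_\mu=\prod_{\nu\in\{0,1,\gl\}\setminus\{\mu\}}(L_a-\nu)/(\mu-\nu)$; evaluating $e_\mu$ on $x$ and simplifying produces the three displayed formulas for $x_1,x_0,x_\gl$. For (ii), the commutativity $L_aR_a=R_aL_a$ built into the axis definition ensures that $R_a$ stabilizes each left eigenspace $A_\rho$, and the minimal polynomial $t(t-1)(t-\gd)$ of $R_a\restriction A_\rho$ still has distinct roots, so the analogous right Lagrange projections applied to $x_\rho$ give the further refinement into $x_{\rho,1}+x_{\rho,0}+x_{\rho,\gd}$.

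For (iii), I use the left axial fusion rules. Suppose the idempotent $b$ is a left $\mu$-eigenvector of $a$ with $\mu\in\{0,1,\gl\}$. If $\mu=0$ we are done. If $\mu=1$, then $b\in A_1=\ff a$, and the only nonzero idempotent in $\ff a$ is $a$ itself, contradicting $a\ne b$. If $\mu=\gl$, then $b$ lies in the $-$-part $A_\gl$, while $b=b^2\in A_\gl\cdot A_\gl\subseteq A_0\oplus A_1$ lies in the $+$-part, forcing $b=0$, which is again impossible.

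For (iv) and (v), I superimpose the left and right gradings. Since $L_aR_a=R_aL_a$, each operator preserves the eigenspaces of the other, so $A$ decomposes into the sum of the nine potential joint eigenspaces $A_{\mu,\nu}$ with $\mu\in\{0,1,\gl\}$ and $\nu\in\{0,1,\gd\}$. The axis definition already gives $A_{1,1}=\ff a$ and $A_{1,\gd}=A_{\gl,1}=0$; dually, $x\in A_{1,0}$ forces $x\in A_1=\ff a$, whence $xa=0$ yields $x=0$, and symmetrically $A_{0,1}=0$. Only the five listed pieces survive, and the grading $A^{\gre\eta}\cdot A^{\gre'\eta'}\subseteq A^{\gre\gre',\,\eta\eta'}$ then follows by intersecting the given left and right $\zz_2$-fusion rules. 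Part (v) is immediate: the Jordan hypothesis $A_{\gl,0}=A_{0,\gd}=0$ annihilates the $+-$ and $-+$ parts, collapsing the four-part decomposition of (iv) into the $\zz_2$-grading with $+$-part $\ff a\oplus A_{0,0}$ and $-$-part $A_{\gl,\gd}$. The only real obstacle is the bookkeeping in (iv): verifying that the five listed spaces exhaust $A$ and that intersecting the two $\zz_2$-gradings genuinely yields a $\zz_2\times\zz_2$-grading rather than a coarser one.
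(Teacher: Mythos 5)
Your proposal is correct and follows essentially the same route as the paper: the Lagrange interpolation identity $\frac 1{\gl -1}L_a(L_a-\gl)-\frac 1{\gl}(L_a-\gl)(L_a-1)-\frac 1{\gl(\gl -1)}L_a(L_a-1)=1$ for (i)--(ii), the fusion-rule contradiction $b=b^2\in A_0\oplus A_1$ for (iii), and superimposing the left and right gradings for (iv)--(v). The only (immaterial) difference is that you kill $A_{1,0}$, $A_{0,1}$ directly via absolute primitivity, whereas the paper reads off the vanishing of these components from the right decomposition of $\ga_x a\in A_{1,1}$.
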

\begin{proof}
Note that
\begin{equation}\label{laxis}
\textstyle{
\frac 1{\gl -1}(L_a(L_a-\gl)) -\frac 1{\gl}((L_a-\gl)(L_a-1)) -\frac
1{\gl(\gl -1)}(L_a(L_a-1)) = 1,}
\end{equation}
so their sum applied
to $x$ is $x$, and each term is in the appropriate left eigenspace.

(ii) As in (i), using equation \eqref{laxis}, with $R_a$ in place of
$L_a,$ noting that $L_a$ and $R_a$ commute.

(iii) If the idempotent $b\ne a$  is a left eigenvector,  then,
since $a$ is left absolutely primitive, its eigenvalue must  be $0$
or $\gl$. But if $b\in A_{\gl}(L_a),$ then, by Definition \ref{defs
main3}(1c), $b^2 \in A_0(L_a)+A_1(L_a),$ but $b^2=b,$ a
contradiction.

 (iv) Combining (i) and (ii) together
with the left $\zz_2$ and the right $\zz_2$ grading given in
Definition \ref{defs main3}((1)\& (2)), shows that:
\[
A=\overbrace{A_{1,1} \oplus A_{1,0} \oplus A_{0,1}\oplus
A_{0,0}}^{\text {$++$-part}}\oplus \overbrace{A_{1,\gd} \oplus
A_{0,\gd}}^{\text{$+-$-part}} \oplus \overbrace{A_{\la,1}\oplus
A_{\la,0}}^{\text {$-+$-part}}\oplus
\overbrace{A_{\gl,\gd}}^{\text{$--$-part}},
\]
is a $\zz_2\times\zz_2$ grading of $A.$

We have $\ga_x a = x_{1,1} + x_{1,0}+ x_{1,\gd},$ so $x_{1,1} = \ga_x a$ 
and $ x_{1,0}= x_{1,\gd}= 0.$

Working from the other side yields $ x_{0,1}= x_{\gl,1}= 0.$

(v) The other components vanish, by the definition of Jordan type.
\end{proof}

\begin{cor}\label{Jt}$ $

\begin{enumerate}\eroman\item An axis  $a$ is in the center of $A$, iff it is of Jordan type
with $A_{\gl,\gd }=0.$

\item An axis $a$ of Jordan type $(\gl,\gd)$ is in $C(A),$ iff either
$A_{\gl,\gd }=0$ or $\gl=\gd.$

\item
Let $a\ne b$ be two axes  with $ab=ba$ and $a$ of type  $(\gl,\gd)$.
Then either $ab=0,$ or $\gl = \gd.$ In particular, if $a$ is of
Jordan type, then either $a\in C(A),$ or $ab=0$.
\end{enumerate}
\end{cor}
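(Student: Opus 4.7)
The plan is to use the $\zz_2 \times \zz_2$-decomposition of $A$ relative to $a$ from Proposition~\ref{prop 3.4}(iv) and to read off the consequences of commutativity component by component: on each summand $A_{\gc,\gr}$, the operators $L_a$ and $R_a$ act as the scalars $\gc$ and $\gr$ respectively, so the hypotheses translate into linear conditions on $(\gl,\gd)$. For (i) and (ii), if $a \in C(A)$ then $L_a x = R_a x$ forces $A_{0,\gd} = 0$ (testing on $A_{0,\gd}$ gives $\gd x = 0$) and symmetrically $A_{\gl,0} = 0$, so $a$ is of Jordan type; the same equation on $A_{\gl,\gd}$ gives $(\gl - \gd) x = 0$, so either $\gl = \gd$ or $A_{\gl,\gd} = 0$. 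Both converses are immediate from the decomposition, which settles~(ii); (i) corresponds to the non-degenerate case $\gl \neq \gd$.

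For (iii), decompose the second axis via Proposition~\ref{prop 3.4}(iv) as
\[
b = \ga a + b_{0,0} + b_{0,\gd} + b_{\gl,0} + b_{\gl,\gd},
\]
and compare $ab = \ga a + \gl b_{\gl,0} + \gl b_{\gl,\gd}$ with $ba = \ga a + \gd b_{0,\gd} + \gd b_{\gl,\gd}$. The hypothesis $ab = ba$ together with the direct sum yields $b_{\gl,0} = b_{0,\gd} = 0$ and $(\gl - \gd)\,b_{\gl,\gd} = 0$. Assuming $\gl \ne \gd$, we get $b = \ga a + b_{0,0}$ with $b_{0,0} \in A_{0,0}$; since $a b_{0,0} = b_{0,0} a = 0$, squaring and using $b^2 = b$ forces $\ga \in \{0,1\}$. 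If $\ga = 1$ then $ba = a$, placing $a \in A_1(R_b) = \ff b$; writing $a = \mu b$ and comparing $a^2 = a$, $b^2 = b$ forces $\mu \in \{0,1\}$, which contradicts either $a \ne 0$ or $a \ne b$. Hence $\ga = 0$, giving $ab = 0$. The ``in particular'' clause follows: when $a$ is of Jordan type, the case $\gl = \gd$ yields $a \in C(A)$ by~(ii), while the case $\gl \ne \gd$ yields $ab = 0$ by the above.

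The delicate step is the elimination of $\ga = 1$: it cannot be done from $b^2 = b$ alone, since then $b = a + b_{0,0}$ with $b_{0,0}$ a nonzero orthogonal idempotent in $A_{0,0}$ would be consistent. The full strength of the axis hypothesis on $b$—specifically the right absolute primitivity $A_1(R_b) = \ff b$—is needed to force $a = b$ and obtain the required contradiction.
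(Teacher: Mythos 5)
Your treatment of (i) and (ii) is correct: reading off $L_a=R_a$ on each summand of the $\zz_2\times\zz_2$-decomposition forces $A_{\gl,0}=A_{0,\gd}=0$ and $(\gl-\gd)A_{\gl,\gd}=0$, and the converses are immediate from the decomposition. This is in substance the paper's argument, stated more transparently. The opening of (iii) --- decomposing $b$ with respect to $a$ and comparing $ab$ with $ba$ to get $b_{\gl,0}=b_{0,\gd}=0$ and $(\gl-\gd)b_{\gl,\gd}=0$ --- also coincides exactly with the paper's.

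The gap is in (iii), at the step ``squaring and using $b^2=b$ forces $\ga\in\{0,1\}$.'' From $b=\ga a+b_{0,0}$ you get $b^2=\ga^2a+b_{0,0}^2$, but the grading of Proposition~\ref{prop 3.4}(iv) only gives $b_{0,0}^2\in A^{++}=A_{1,1}\oplus A_{0,0}=\ff a\oplus A_{0,0}$; the paper explicitly does \emph{not} assume that $A_0$ is a subalgebra. Writing $b_{0,0}^2=\mu a+c$ with $c\in A_{0,0}$, idempotence of $b$ yields only $\ga^2+\mu=\ga$, which does not pin $\ga$ to $\{0,1\}$ unless $\mu=0$. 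Put differently, what you actually know at this point is that $ba=ab=\ga a$, so $a$ is a left eigenvector of the left axis $b$; since $(L_b-\gl')(L_b-1)L_b=0$ for the left type $\gl'$ of $b$, the eigenvalue satisfies $\ga\in\{0,1,\gl'\}$, and your argument silently drops the case $\ga=\gl'$. That case is excluded not by idempotence of $b$ but by idempotence of $a$ together with the $\zz_2$-grading for $b$: if $a\in A_{\gl'}(L_b)$ then $a=a^2$ lies in the $+$-part $A_0(L_b)\oplus A_1(L_b)$, a contradiction. This is precisely Proposition~\ref{prop 3.4}(iii), which is how the paper concludes: $ab\in\ff a$ makes $a$ an eigenvector of the axis $b\ne a$, hence $ab=0$. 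Your elimination of $\ga=1$ via absolute primitivity of $b$ is fine and matches the eigenvalue-$1$ branch of that proposition; you need the $\gl'$ branch as well, and the reduction to $\{0,1\}$ as you wrote it does not hold.
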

\begin{proof}
(i) $(\Rightarrow)$ For any $x\in A$,
\[
\gl^2 x_\gl + \ga_x a= a(ax) = ax = \gl x_\gl + \ga_x a,
\]
implying $x_\gl = 0$ since $\gl \ne 0,1.$ Hence $x_{\gl,0}=0 =
x_{\gl,\gd},$ and by symmetry $x_{0,\gd}=0.$

$(\Leftarrow)$ $x = \ga _x a+x_{0,0}$ for any $x$ in $A$, implying
$ax = xa = \ga_x a$ and also $xy = x_{0,0}y_{0,0} + \ga _x \ga _y
a,$ and thus $a(xy) = (ax)y = \ga _x a y = \ga _x \ga_y a$ for all
$x,y$.

 (ii) Suppose that $a\in C(A)$  and that $A_{\gl,\gd }\ne 0.$
Let $0\ne x_{\gl,\gd}\in A_{\gl,\gd }.$ Then $\gl x=ax=xa=\gd x,$ so
$\gl=\gd.$

Conversely, for $x\in A,$   decompose $x$ according to $a:$ $x  =
\ga a+x_{0,0}+ x_{\gl,\gl}$, where $\ga \in \ff$ and $x_{\gl,\gl}$
may be $0.$ Then $ax=\ga a+\gl x_{\gl,\gl}=xa,$ so $a\in C(A).$

(iii) Let $a$  be of type $(\gl,\gd)$  and let $b=\ga_b a+b_{0,0}
+b_{\gl,0} +b_{0,\gd}+b_{\gl,\gd}$ be the decomposition of  $b$ with
respect to~$a$. Then $\ga_b a +\gl b_{\gl,0}+\gl b_{\gl,\gd}
=ab=ba=\ga_b a +\gd b_{0,\gd}+\gd b_{\gl,\gd}$ implying
$b_{\gl,0}=b_{0,\gd}=0.$ If $b_{\gl,\gd}\ne 0,$ then $\gl =\gd$, and
we are done. Hence $ b_{\gl,\gd}=0$, in which case $ab \in \ff a.$
But then $a$ is an eigenvector of $b$, so by Proposition
\ref{prop 3.4}(iii), $ab =0.$
\end{proof}

\subsection{Axes in flexible algebras} $ $

Next we analyze the situation in flexible algebras. Here are two
examples of  flexible algebras that are not commutative.

\begin{examples}\label{flex}
Let $\gl,\gd\in\ff$ such that $\gl,\gd\notin\{0,1\},$
$\gl+\gd=1,$ and $\gl\ne\gd.$
\medskip

\noindent (1)\ Let $A$ be  the $2$-dimensional algebra $\ff a+\ff b$
with multiplication defined by
\[
a^2=a,\quad  b^2=b,\quad ab=\gd a+\gl b\quad ba=\gl a+\gd b.
\]
Then $(ab)a=\gd a+\gl ba=\gd a+\gl^2a+\gl\gd b$
and $a(ba)=\gl a+\gd ab=\gl a+\gd^2a+\gl\gd b.$
Note however that $\gd+\gl^2=\gl+\gd^2.$ So $(ab)a=(ab)a.$
By symmetry $(ba)b=b(ab).$  Next we show that $R_a^2-R_a=L_a^2-L_a$.
We have
\begin{gather*}
a(ab)-ab=a(\gd a+\gl b)-\gd a-\gl b=\gl ab-\gl b=\gl(\gd a+\gl b)-\gl b\\
=\gl\gd a+\gl^2b-\gl b.
\end{gather*}
and
\begin{gather*}
(ba)a-ba=(\gl a+\gd b)a-\gl a-\gd b=\gd ba-\gd b=\gd(\gl a+\gd b)-\gd b\\
=\gd\gl a+\gd^2b-\gd b.
\end{gather*}
Since $\gl^2-\gl=\gd^2-\gd,$ we are done.

Next
\begin{gather*}
((\ga a+\gb b)a)(\ga a+\gb b)=(\ga a+\gb ba)(\ga a+\gb b)=\\
\ga^2 a+\ga\gb ab+\gb\ga (ba)a+\gb^2(ba)b,
\end{gather*}
and
\begin{gather*}
((\ga a+\gb b)(a(\ga a+\gb b))=(\ga a+\gb b)(\ga a+\gb ab)=\\
\ga^2 a+\ga\gb a(ab)+\gb\ga ba+\gb^2b(ab).
\end{gather*}
Since $(ba)a-ba=a(ab)-ab,$ and since $(ab)a=a(ba),$ we see that
$(xa)x=x(ax),$ for all $x\in A.$  By symmetry $(xb)x=x(bx),$ for all $x\in A,$
so $A$ is flexible.

We have
\begin{align*}
&a(a-b)=a-ab=a-(\gd a+\gl b)=\gl(a-b)\text{ and }\\
&b(a-b)=ba-b=\gl a+\gd b-b=\gl(a-b).
\end{align*}
Similarly $(a-b)a=\gd(a-b)=(a-b)b$. (Indeed $(a-b)^2=0.$) We thus
have $A_{\gl,\gd }(a)=A_{\gl,\gd }(b)=\ff(a-b)$, with
$(\ff(a-b))^2=0,$ and of course $A_0(a)=A_0(b)=0.$  Thus the fusion
laws hold for both $a$ and $b$, and both are  axes of Jordan type
$(\gl,\gd)$. Note that the idempotents in $A$ have the form $\ga
a+(1-\ga)b,\ \ga\in\ff.$
\medskip

\noindent (2)\
 Let
$A =\ff a+\ff b+\ff x,$ with multiplication defined by $a^2=a,$ \
$b^2=b,\ x^2=0,$ and
\[
ab = ax =xb =\gl x,\qquad ba=xa=bx=\gd x.
\]
 So
\[
\ff x = A_{\gl,\gd },\qquad A_{0,0}=\ff (b-x).
\]

In checking the fusion rules for $a$, we have $(b-x)x = bx \in \ff
x$ and $x^2 = 0,$ and also
$$(b-x)^2 = b^2 -bx -xb + x^2 = b -(\gl + \gd)x =b-x.$$

Likewise for $b$. Thus $a$ and $b$ are axes of
 Jordan type $(\gl,\gd)$ and $(\gd,\gl)$ respectively.

Let us check the eigenvalues of the idempotent $b-x.$ $a(b-x) = 0 =
(b-x)a.$ $(b-x)x = \gd x$, and $x(b-x) = \gl x$. Thus $b-x$ also is
a Jordan axis, of type $(\gd,\gl).$

Note that the idempotents in $A$ all  have the form $\ga a+\gb b+\xi
x,$ with $\ga+\gb=1.$

To show that $A$ is flexible, we compute that
\begin{equation*}
\begin{aligned}
&((\ga a+\gb b +\xi x)a)(\ga a+\gb b +\xi x)\\
&=(\ga a+\xi x)(\ga a+\gb b +\xi x)\qquad \eta=\gb\gd+\xi\gd\\
&=\ga^2 a+\ga\gb\gl x+\ga\xi\gl x+ \eta(\ga\gd+\gb\gl)x\\
&=\ga^2 a+\ga\gb\gl x+\ga\xi\gl x+(\ga\gb\gd^2+\gb^2\gl\gd+\ga\xi\gd^2+\xi\gd\gb\gl)x
\end{aligned}
\end{equation*}
and
\begin{equation*}
\begin{aligned}
&(\ga a+\gb b +\xi x)(a(\ga a+\gb b +\xi x))\\
&= (\ga a+\gb b +\xi x)(\ga a+\gc x)\qquad \gc=\gb\gl+\xi\gl\\
&=\ga^2 a+\ga\gb\gd x+\ga\xi\gd x+ \gc(\ga\gl+\gb\gd)x\\
&=\ga^2 a+\ga\gb\gd x+\ga\xi\gd x+(\ga\gb\gl^2+\gb^2\gl\gd+\xi\ga\gl^2+\xi\gl\gb\gd)x
\end{aligned}
\end{equation*}
We must show that
\[
\ga\gb\gl+\ga\xi\gl+\ga\gb\gd^2+\ga\xi\gd^2=\ga\gb\gd+\ga\xi\gd +\ga\gb\gl^2+\xi\ga\gl^2.
\]
But this follows from the fact that $\gl^2-\gl=\gd^2-\gd.$ By
symmetry $(yb)y=y(by),$ for all $y\in A.$ It is easy to check that
$(yx)y=y(xy),$ for all $y\in A,$ and we see that $A$ is flexible.
\end{examples}

\begin{thm}\label{flex a,b}
Suppose $A$ is a flexible algebra. Then
\begin{enumerate}
\item Any axis
$a$ has some Jordan type $(\gl,\gd)$ and is either in $C(A)$ (with
$\gl=\gd)$, or  $\gd = 1-\gl,$ $\gl\ne\half,$ and $x^2 = 0$ for all
$x \in
 A_{\gl,\gd }.$
\item For axes $a, b\in A$,
 write
\[
b=\ga_b a+b_0+b_{\gl,\gd}\text{ and }a=\ga_a b+a_0+a_{\gl',\gd'},
\]
where $b_0\in A_0(a),\ b_{\gl,\gd}\in A_{\gl,\gd }(a),\ a_0\in
A_0(b),\ a_{\gl',\gd'}\in A_{\gl',\gd'}(b).$  Assume that
$\gl\ne\gd,$ and $\gl'\ne\gd'$ (so that both $a$ and $b$ are not in
$C(A)$, and $\gd = 1- \gl$ and $\gd' = 1-\gl'$).
\begin{itemize}
\item[(I)]
$(ab)^2=\ga_b (ab),$ and $(ba)^2=\ga_b (ba).$
In particular, if $ab\ne 0,$ then $\ga_a=\ga_b.$

\item[(II)]
Suppose that $b_{\gl,\gd}\ne 0,$ so that $ab\ne 0,$ and hence
$\ga_a=\ga_b.$ Write $\ga:=\ga_b,$ and let $\lan a,b\ran$ be the
subalgebra generated by $a,b.$ Then
\begin{itemize}
\item[(i)]
$b_0b_{\gl,\gd}=\gd(1-\ga)b_{\gl,\gd},$ and $b_{\gl,\gd}b_0=\gl(1-\ga)b_{\gl,\gd}.$

\item[(ii)]
$bb_{\gl,\gd}=(\ga(\gl-\gd)+\gd)b_{\gl,\gd}$ and $b_{\gl,\gd}b=(\ga(\gd-\gl)+\gl)b_{\gl,\gd}.$
In particular $a_{\gl',\gd'}$ is a scalar multiple of $b_{\gl,\gd}.$

\item[(iii)]
$\lan a, b\ran$ is spanned by $a,b, b_{\gl,\gd}.$

\item[(iv)]
If $\ga\ne 0,$ then $\ga =1,$ and  $ \lan a, b\ran=\ff a+\ff b$ is
isomorphic to the algebra in Example \ref{flex}(i). Hence $\gl'
=\gl.$

\item[(v)]
If $\ga=0,$ then $\lan a, b\ran$ is isomorphic to the algebra in
Example \ref{flex}(ii).  Hence $\gl' =1-\gl.$

\end{itemize}
\item[(III)]
Either $ab=ba=0$ or $\lan a, b\ran$ is isomorphic to one of the
algebras in Examples \ref{flex}.
\end{itemize}
\end{enumerate}
\end{thm}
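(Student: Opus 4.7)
For part (1), I would start from Lemma \ref{acationscom20}: every $(\mu,\nu)$-joint eigenvector of a flexible idempotent satisfies $\nu\in\{\mu,1-\mu\}$. Applied to the potentially nonzero joint eigenspaces $A_{1,1}, A_{0,0}, A_{0,\gd}, A_{\gl,0}, A_{\gl,\gd}$ from Proposition \ref{prop 3.4}(iv), this immediately kills $A_{0,\gd}$ and $A_{\gl,0}$, giving Jordan type; the surviving $A_{\gl,\gd}$ forces $\gd\in\{\gl,1-\gl\}$. The case $\gl=\gd$ gives $a\in C(A)$ by Corollary \ref{Jt}(ii); in the case $\gd=1-\gl$ (which requires $\gl\ne\half$), applying flexibility to $x\in A_{\gl,\gd}$ gives $\gd x^2=(xa)x=x(ax)=\gl x^2$, forcing $x^2=0$.

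For part (2)(I), Lemma \ref{lem 3.3} gives $ab=\ga_b a+\gl b_{\gl,\gd}$ and $ba=\ga_b a+\gd b_{\gl,\gd}$, after which expanding the squares, using $b_{\gl,\gd}^2=0$ (from part (1)) and $\gl+\gd=1$, yields both squaring identities. For the $\ga_a=\ga_b$ clause, the identical computation with roles of $a,b$ reversed gives $(ba)^2=\ga_a(ba)$, so $(\ga_a-\ga_b)(ba)=0$; since $ab=0$ forces $\ga_b=b_{\gl,\gd}=0$ and hence $ba=0$ as well, the hypothesis $ab\ne 0$ yields $ba\ne 0$ and so $\ga_a=\ga_b$.

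For (II) under the hypothesis $b_{\gl,\gd}\ne 0$, write $\ga:=\ga_a=\ga_b$. Item (i) follows from a two-equation system: the $A_{\gl,\gd}$-component of $b^2=b$ gives $b_0 b_{\gl,\gd}+b_{\gl,\gd}b_0=(1-\ga)b_{\gl,\gd}$, while the linearized flexibility identity $(xy)z+(zy)x=x(yz)+z(yx)$ evaluated at $(x,y,z)=(b_0,a,b_{\gl,\gd})$ yields $\gd b_{\gl,\gd}b_0=\gl b_0 b_{\gl,\gd}$; solving the resulting $2\times 2$ system gives (i). Item (ii) follows by expanding $bb_{\gl,\gd}=\ga ab_{\gl,\gd}+b_0 b_{\gl,\gd}+b_{\gl,\gd}^2$ with (i), and similarly for $b_{\gl,\gd}b$; the scalar-multiple assertion for $a_{\gl',\gd'}$ drops out of the commutator comparison $(\gl-\gd)b_{\gl,\gd}=ab-ba=(\gd'-\gl')a_{\gl',\gd'}$. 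Item (iii) is verified by listing the nine pairwise products of $\{a,b_0,b_{\gl,\gd}\}$ and observing they all lie in the span.

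For (iv)/(v), I use eigenvalue-matching: by (ii), $b_{\gl,\gd}$ is a left $b$-eigenvector with eigenvalue $\mu:=\ga\gl+(1-\ga)\gd$; the axis structure of $b$ (with $A_{1,0}(b)=A_{0,1}(b)=0$ by Proposition \ref{prop 3.4}(iv)) forces $\mu=\gl'$, i.e., $\gl'=\ga\gl+(1-\ga)\gd$, and symmetry yields $\gl=\ga\gl'+(1-\ga)\gd'$. Subtracting and using $\gl+\gd=\gl'+\gd'=1$ gives $2\ga(\gl'-\gl)=0$. If $\ga\ne 0$, then $\gl'=\gl$, and back-substitution forces $(1-\ga)(2\gl-1)=0$, so $\ga=1$ (since $\gl\ne\half$); then (i) gives $b_0 b_{\gl,\gd}=b_{\gl,\gd}b_0=0$, so $bb_0=b_0$ by direct expansion, placing $b_0\in A_1(L_b)=Fb$; writing $b_0=\xi b$ and projecting onto the $A_{1,1}(a)$-component forces $\xi=0$, hence $b_0=0$, so $\lan a,b\ran=Fa+Fb$ with multiplication matching Example \ref{flex}(i). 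If $\ga=0$, then $\gl'=1-\gl$, and the multiplication table from (i)--(ii) under $\ga=0$ coincides exactly with Example \ref{flex}(ii) with $x:=b_{\gl,\gd}$. Finally, for (III): if $ab=0$ then $ba=0$; otherwise $b_{\gl,\gd}\ne 0$ (since $b_{\gl,\gd}=0$ would force $ab=ba$, contradicting Corollary \ref{Jt}(iii) under $\gl\ne\gd$), so (II) applies.

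The main obstacle is the coupled eigenvalue analysis in (iv) and (v), requiring one to track $b_{\gl,\gd}$ simultaneously as an eigenvector of $L_a$ and $L_b$; the symmetric substitution produces a degenerate linear equation in $\ga$, whose resolution depends critically on the hypothesis $\gl\ne\half$, and the subsequent reconstruction of $b_0=0$ requires combining the $a$-decomposition with the left absolute primitivity of $b$.
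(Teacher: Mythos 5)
Your proof is correct, and for most of the theorem it coincides with the paper's: part (1) via Proposition~\ref{axi2} (equivalently Lemma~\ref{acationscom20}) plus the flexibility computation $\gd x^2=(xa)x=x(ax)=\gl x^2$; part (2)(I) by the same expansion of $(\ga_b a+\gl b_{\gl,\gd})^2$ using $b_{\gl,\gd}^2=0$; and (II)(i) by exactly the paper's two-equation system, namely the $A_{\gl,\gd}$-component of $b^2=b$ together with the linearized flexible law at $(b_0,a,b_{\gl,\gd})$ giving $\gd b_{\gl,\gd}b_0=\gl b_0b_{\gl,\gd}$. The genuine divergence is in (iv) and (v). The paper proves (iv) by comparing the two expressions for $ab$ (one from each decomposition) to get $0=\ga(a-b)+\gr b_{\gl,\gd}$, hence $b_{\gl,\gd}\in\ff(a-b)$ and $\lan a,b\ran=\ff a+\ff b$ is $2$-dimensional, from which $b_0=0$ and $\ga=1$ drop out; it then proves (v) by a bootstrap, applying (iv) to the pair $(b,b_0)$ (where $b_0$ is an idempotent since $\ga=0$) to identify the type of $b$ before matching with Example~\ref{flex}(2). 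You instead observe from (II)(ii) that $b_{\gl,\gd}$ is a $(\mu,\nu)$-eigenvector of $b$ with $\mu+\nu=1$, use $A_{1,0}(b)=A_{0,1}(b)=0$ to force $(\mu,\nu)=(\gl',\gd')$, and derive the coupled equations $\gl'=\ga\gl+(1-\ga)\gd$ and $\gl=\ga\gl'+(1-\ga)\gd'$, whence $2\ga(\gl'-\gl)=0$ and, using $\gl\ne\half$, $\ga\in\{0,1\}$. This determines $\ga$ and $\gl'$ uniformly before any structural identification and replaces the paper's bootstrap in (v) by a direct matching of multiplication tables; the only place you still need a paper-style argument is the elimination of $b_0$ in case $\ga=1$, which your chain $bb_0=b_0$, $b_0\in A_1(L_b)=\ff b$, projection onto $\ff a$, handles correctly. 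Both routes are valid; yours trades the paper's short ad hoc computation in (iv) for a cleaner symmetric eigenvalue argument covering both cases at once.
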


\begin{proof}
(1) The first assertion is by Proposition~\ref{axi2}. Suppose $a$ is
an  axis of Jordan type $(\gl,\gd)$. Then
$\gd  x^2 =(xa) x =  x(a  x) = \gl x^2,$ for $x\in A_{\gl,\gd }.$
Hence $\gl = \gd$ unless $ x^2 = 0$ for all $x\in A_{\gl,\gd }$.
\medskip

\noindent
(2)(I)\
We have
\[
ab=\ga_b a+\gl b_{\gl,\gd}\implies b_{\gl,\gd}=\gl^{-1}(ab-\ga_b a).
\]
Hence, since $ab_{\gl,\gd}+b_{\gl,\gd}a=b_{\gl,\gd},$ and $b_{\gl,\gd}^2=0,$ we get
\[
(ab)^2=(\ga_b a+\gl b_{\gl,\gd})^2=\ga_b^2 a+\ga_b (ab-\ga_b a)=\ga_b (ab).
\]
The proof that $(ba)^2=\ga_b (ba)$ is similar.
\medskip

\noindent (2)(II)\ (i)\ We have
\[
\begin{aligned}
b=b^2&=(\ga_ba+b_0+b_{\gl,\gd})^2\\
&=\ga_b^2 a+b_0^2+\ga_b(\gl+\gd)b_{\gl,\gd}+b_0b_{\gl,\gd}+b_{\gl,\gd}b_0.
\end{aligned}
\]
Hence, by the $\zz_2$ grading of $A,$ we have
\begin{equation}\label{b0}
b_0^2=(\ga_b-\ga_b^2)a+b_0,\text{ and}
\end{equation}
\begin{equation}\label{eq 3.2}
(1-\ga_b)b_{\gl,\gd}=b_0b_{\gl,\gd}+b_{\gl,\gd}b_0.
\end{equation}
Furthermore, linearizing the flexible axiom yields
\[
(xy)z + (zy)x = x(yz) +z(yx).
\]
Taking  $b_0$ for $x$ and $a$ for $y$ yields $xy=yx =
0$, implying $(za)b_0 = b_0(az),$ and thus, for $z = b_{\gl, \gd}$
we have
\begin{equation}\label{line}
\gd b_{\gl,\gd} b_0 = \gl b_0 b_{\gl,\gd}.
\end{equation}
Combining equations \eqref{eq 3.2} and \eqref{line} and the fact that
$\gl+\gd=1,$ we get (i).
\medskip

\noindent (ii)  This is an easy computation using (i), $\gl+\gd=1,$
and the fact that $b_{\gl,\gd}^2=0.$
\medskip

\noindent
(iii)  Set $B:=\ff a+\ff b+\ff b_{\gl,\gd}.$ Clearly $ab,\ ab_{\gl,\gd},\ ba,\ b_{\gl,\gd}a\in B.$  By (ii), also $bb_{\gl,\gd}, b_{\gl,\gd}b\in B,$
and $b_{\gl,\gd}^2=0.$
\medskip

\noindent
(iv)\ Suppose $\ga\ne 0.$
By (ii) we have
\[
ab=\ga b+\gc b_{\gl,\gd},\qquad\gc\in\ff.
\]
Hence
\[
0=ab-ab=\ga(a-b)+\gr b_{\gl,\gd},\qquad \gr\in\ff.
\]
This shows that $b_{\gl,\gd}$ is a multiple of $a-b.$  By (iii), $\lan a,b\ran=\ff a+\ff b.$
In particular $\lan a,b\ran_0(b)=0,$ so $b=\ga a+b_{\gl,\gd}.$  Since $b_{\gl,\gd}$ is
a multiple of $a-b,$ we must have $\ga =1.$  It is now easy to check that (iv) holds.
\medskip

\noindent
(v)\
 Suppose that $\ga=0.$
Then, by equation \eqref{b0}, $b_0$ is an idempotent, and by (i),
\[
b_0b_{\gl,\gd}=\gd b_{\gl,\gd}\text{ and }b_{\gl,\gd}b_0=\gl b_{\gl,\gd}.
\]
But now $b_0$ is an idempotent and $b=b_0+b_{\gl,\gd},$ where
$b_{\gl,\gd}\in A_{\gd,\gl}(b_0).$ Also $bb_0\ne b_0b.$ By (iv) the
subalgebra generated by $b$ and $b_0$ is as in Example
\ref{flex}(i). Hence, since $b_0$ is of type $(\gd,\gl)$, $b$ is
also of type $(\gd,\gl)$.

Now $ab=\gl a_{\gd,\gl},$ but, since $\ga=0,$ also $ab=\gl b_{\gl,\gd}.$
Hence $a_{\gd,\gl}=b_{\gl,\gd}.$  We now see that $\lan a,b\ran$
is the algebra as in Example \ref{flex}(ii), with $x=b_{\gl,\gd}.$
\medskip

\noindent (III)
Assume that $ab\ne 0.$ If $b_{\gl,\gd}=a_{\gl',\gd'}=0,$ then
$ab=\ga_ba=ba=\ga_ab.$  this is possible iff $\ga_a=\ga_b=0,$
but then $ab=0.$ Hence we may assume without loss that $b_{\gl,\gd}\ne 0,$
and (III) holds by (II).
\end{proof}
\begin{thm}\label{twoax0}
Suppose $a$ is a   left axis, and $b$ is a right axis.  Then
\begin{enumerate}\eroman

\item  $a(ax)  \in \ff a  + \ff ax,$ for all $x\in A.$

\item $(xb)b\in \ff b + \ff xb,$ for all $x\in A.$

\item
Let
$$V = \ff a + \ff b + \ff ab + \ff ba.$$
Then $(ab)(ab) - a(bab) \in V.$
\item
Suppose that $a$ and $b$ are  axes (whose types may be distinct!),
and  that $A$ is generated by $a$ and $b.$
\begin{enumerate}
\item If $V$ (of
(iii)) contains $aba$,  $bab$, $(ab)(ba),$ and $(ba)(ab) ,$ then
$V=A.$

\item Let
$$V' = \ff a + \ff b + \ff ab + \ff ba +\ff aba +\ff bab.$$ Then
$aV', bV', V'a, V'b \subseteq V'.$
\end{enumerate}
\end{enumerate}
\end{thm}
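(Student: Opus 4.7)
Parts (i) and (ii) are immediate from Proposition~\ref{prop 3.4}. For (i), write $x=\ga_x a+x_0+x_\gl$ in the left $L_a$-eigenspaces, so $ax=\ga_x a+\gl x_\gl$; solving $x_\gl=\gl^{-1}(ax-\ga_x a)$ and applying $L_a$ a second time gives $a(ax)=\ga_x a+\gl^2 x_\gl=(1-\gl)\ga_x a+\gl\,ax\in\ff a+\ff ax$. Part (ii) is symmetric, using the right $R_b$-decomposition of $x$ and the formula $(xb)b=(1-\gd)\ga_x^R b+\gd\,xb$.

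For (iii), the plan is to substitute $b=\ga_b a+b_0+b_\gl$ (the left-axial decomposition of $b$), so that $ab=\ga_b a+\gl b_\gl$ with $b_\gl=\gl^{-1}(ab-\ga_b a)\in V$, and expand both $(ab)(ab)$ and $a(bab)=a(b(ab))$ termwise. The left $\zz_2$-grading collapses most of the resulting products: $b_\gl a$ and $b_0 b_\gl$ lie in $A_\gl$, while $b_\gl^2$ and $b_0 a$ lie in $\ff a+A_0$, so $L_a$ acts on the first group as multiplication by $\gl$ and sends the second group into $\ff a$. The bulk of the terms in $(ab)(ab)$ and $a(bab)$ then cancel, leaving the difference, modulo $V$, equal to a multiple of $b_\gl^2-b_0 b_\gl$. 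To dispose of this residue, I combine the identity $bb_\gl=\ga_b\gl b_\gl+b_0 b_\gl+b_\gl^2$ (obtained by multiplying the decomposition of $b$ by $b_\gl$) with $bb_\gl=\gl^{-1}(bab-\ga_b ba)$, and then invoke the right-axial $\zz_2$-grading at $b$ to re-express the surviving pieces in $V$. The main obstacle is precisely this last cancellation: the left axis at $a$ alone cannot locate the $A_0(L_a)$-component of $b_\gl^2$ inside $V$, so the right-axis structure at $b$ must be brought in to supply the complementary constraints.

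For (iv)(a), I would induct on the word length of a monomial in $a,b$, using that the subalgebra generated by $a,b$ equals $A$. Lengths $\le 2$ are in $V$ by definition, and length $3$ is covered either by hypothesis ($aba,bab\in V$) or by (i), (ii) (which handle $a(ab)$, $(ab)b$, $(ba)a$, $b(ba)$). At length $4$ the identity (iii) handles $(ab)(ab)$: since $bab\in V$ one needs $a(bab)\in V$, and this follows once $L_a(V)\subseteq V$ has been verified from (i) together with the hypothesis $a(ba)=aba\in V$. A symmetric argument based on (ii) settles $(ba)(ba)\in V$, while $(ab)(ba),(ba)(ab)\in V$ by hypothesis. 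Iterating closure under left and right multiplication by $a$ and $b$ gives $V=A$. For (iv)(b), a direct table-style check suffices: for each generator $v\in\{a,b,ab,ba,aba,bab\}$ of $V'$, one computes $av,bv,va,vb$ and verifies membership in $V'$ using (i) (e.g., $a(ab)\in\ff a+\ff ab$, and $a(a(ba))\in\ff a+\ff aba$ applied with $x=ba$), using (ii) symmetrically for right multiplication by $b$, and using the explicit presence of $aba$, $bab$ in $V'$ to absorb the remaining length-$3$ and length-$4$ products.
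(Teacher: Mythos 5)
Parts (i) and (ii) are correct and are exactly the paper's route (Lemma~\ref{lem 3.3}). The trouble is part (iii). Your direct expansion is right as far as it goes: with $ab=\ga_b a+\gl b_\gl$ one indeed finds, using the left grading at $a$ (so $b_\gl a, b_0b_\gl\in A_\gl$ and $b_\gl^2, b_0a\in \ff a+A_0$), that the $b_\gl a$ terms cancel and
\[
(ab)(ab)-a(bab)\equiv \gl^2\bigl(b_\gl^2-b_0b_\gl\bigr) \pmod V .
\]
But your plan for disposing of this residue does not work. The identity $bb_\gl=\ga_b\gl b_\gl+b_0b_\gl+b_\gl^2$ together with $bb_\gl=\gl^{-1}(bab-\ga_b\,ba)$ controls the \emph{sum} $b_0b_\gl+b_\gl^2$, and only modulo $bab$ --- which is \emph{not} in $V$ under the hypotheses of (iii); you need the \emph{difference}, and the appeal to ``the right-axial grading at $b$'' is not an argument. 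You have correctly isolated the hard point and then left it open. The paper sidesteps the residue entirely: since $ab-\gl b=(1-\gl)\ga_b a-\gl b_0$ lies in the plus part $\ff a+A_0$, one has $(ab)(ab)\equiv(ab)(ab-\gl b)\equiv\gl\, b_\gl(ab-\gl b)\pmod V$ by (ii) and (i); the grading gives $b_\gl(ab-\gl b)\in A_\gl$, hence $\gl\, b_\gl(ab-\gl b)=a\bigl(b_\gl(ab-\gl b)\bigr)$, and re-expanding $b_\gl=b-\ga_b a-b_0$ inside $a(\cdot)$ yields $a(bab)$ plus terms in $V$. No $b_\gl^2$ ever appears because the second factor $ab-\gl b$ has no $A_\gl$-component. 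You must either adopt this substitution or give an actual proof that $b_\gl^2-b_0b_\gl\in V$.

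Granting (iii), your (iv)(a) is fine. In (iv)(b), however, the ``table-style check'' skips precisely the nontrivial entries: $a(bab)$, $b(aba)$, $(bab)a$, $(aba)b$ are length-four products not handled by (i), (ii), nor by the mere presence of $aba,bab$ among the generators of $V'$. In the paper, $a(bab)\in V'$ requires three separate inputs: (iii) gives $a\bigl((ab)(ab)\bigr)-a\bigl(a(bab)\bigr)\in aV\subseteq V'$; Lemma~\ref{lem 3.3}(3) turns $a(a(bab))$ into $\gl\,a(bab)$ modulo $\ff a$; and the $\zz_2\times\zz_2$ fusion rules of Proposition~\ref{prop 3.4}(iv), together with $b_{\gl,\gd},b_{\gl,0}\in V'$ (extracted from $aba$ and $ab$), show $a\bigl((ab)(ab)\bigr)\in\ff a+\ff b_{\gl,0}\subseteq V'$. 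None of this is a routine table entry, and your sketch gives no substitute for it.
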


\begin{proof} (i) , (ii) Special cases of Lemma~\ref{lem 3.3}.
\medskip

\noindent
(iii)  We write $y \cong z$ if $y-z\in V.$ Now $(ab - \gl b)\in
Fa+A_0(L_a),$ so
\[
b_{\gl} (ab - \gl b) \in
 A_{\gl}.
\]
Note that by (ii), $(ab)b\in V.$ Note also that applying $L_a$ to
(i) implies $a(a(ab))\in V.$ Hence
 \begin{equation*}
\begin{aligned}
(ab) (ab) &\cong ab (ab - \gl b)= \gl b_{\gl} (ab - \gl b) + \ga_ b a  (ab - \gl b)\cong \gl b_{\gl} (ab - \gl b)\\
 &= a(b_{\gl} (ab - \gl b))= a((b-\ga_b a - b_{0})(ab - \gl b))\\
& = a((b - \ga_b a)(ab))-  \gl a((b - \ga_b a)b) - a(b_{0} (ab - \gl
b))\\ & = a(bab)  - \ga_b a(a(ab))-  \gl a b + \gl \ga_b a(ab) -
a(b_{0} (ab - \gl b)) \\& \cong a(bab),
\end{aligned}
\end{equation*}
in view of (i), since $ a(b_0 (ab - \gl
 b)) \in a(A_{0}+ \ff a) \subseteq \ff a.$
\medskip

\noindent
 (iv(a)) We check that  $V$ is closed under products.

By (i) and (ii) and since $aba, bab\in V,$ we see that $V$ is closed
under $L_a, R_a, L_b$ and $R_b.$

 By (iii), $(ab)(ab) \cong a(bab)\in aV \subseteq V$.
  Reversing the roles of $a$ and $b$ shows that $(ba)(ba)\in
V$. We are given that  $(ab)(ba)\in V $ and $(ba) (ab)  \in V.$
\medskip

\noindent
  (iv(b)) By symmetry (both with respect to $a,b$ and to working
    on the left or on the right), it is enough to show that $aV'\subseteq V'$.
By (i), $a(aba)= a(a(ba))\in \ff a+\ff aba\subseteq V'.$

It remains to check that $a(bab)\in V'$. By (iii) and (i),
$$a\big((ab)(ab)-a(bab)\big)\in V'.$$ By  Lemma~\ref{lem 3.3}(3),
$a(a(bab))-\gl a(bab)\in V'$   implying $a((ab)(ab))-\gl a(bab)\in
V'.$

Let the type of $a$ be $(\gl,\gd).$ We work with the decomposition
of $b$ with respect to $a.$ Now $aba=\ga_ba+\gl\gd b_{\gl,\gd},$ so
$b_{\gl,\gd}\in V'.$ Hence $b_{\gl,0}\in V',$ because $ ab=\ga_b
a+\gl b_{\gl,0}+\gl b_{\gl,\gd}\in V'.$ by the fusion rules of
Proposition~\ref{prop 3.4}(iv), that $a((ab)(ab))$ is an $\ff$-linear
combination of $a, b_{\gl,0}$ and $b_{\gl,\gd}.$ Hence
$a((ab)(ab))\in V',$ so $a(bab)\in V'.$
\end{proof}

 In \cite[Theorem 1.10]{RoSe3} we shall prove that the conclusion of
(iv(b)) implies $A = \ff a + \ff b + \ff ab + \ff ba +\ff aba +\ff
bab,$ by means of an extra technique of ``Miyamoto involutions.''
This result can be improved in certain situations:

\begin{cor}\label{twoax}
Suppose $A$ is an algebra generated by two  axes $a$ and $b$ (whose
types may be distinct!), and let
$$V = \ff a + \ff b + \ff ab .$$
\begin{enumerate}\eroman
\item If $ba \in V$, then $V=A.$
\item In
particular, (i) is the case when $a$ or $b$ are  axes of Jordan
type.

\end{enumerate}
\end{cor}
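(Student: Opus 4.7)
My plan for (i) is to invoke Theorem~\ref{twoax0}(iv(a)): under the hypothesis $ba\in V$, the four-term space $\ff a+\ff b+\ff ab+\ff ba$ appearing there reduces to $V$, so it suffices to exhibit $aba$, $bab$, $(ab)(ba)$, and $(ba)(ab)$ in~$V$. The linchpin will be the two closure statements $aV\subseteq V$ and $Vb\subseteq V$. For $aV\subseteq V$, the generators $a^2=a$, $ab$, and $a(ab)$ all lie in $V$, the last by Theorem~\ref{twoax0}(i); for $Vb\subseteq V$, the generators $ab$, $b^2=b$, and $(ab)b$ all lie in~$V$, the last by Theorem~\ref{twoax0}(ii). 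Since $a$ and $b$ are axes, $L_aR_a=R_aL_a$ and $L_bR_b=R_bL_b$, so the bracketings of $aba$ and $bab$ are unambiguous, and I obtain $aba=a(ba)\in aV\subseteq V$ and $bab=(ba)b\in Vb\subseteq V$.

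For the remaining quartic products I first handle $(ab)(ab)$: Theorem~\ref{twoax0}(iii) gives $(ab)(ab)-a(bab)\in V$, and $a(bab)\in aV\subseteq V$, so $(ab)(ab)\in V$. Writing the hypothesis as $ba=\alpha a+\beta b+\gamma ab$, the expansions
\[
(ab)(ba)=\alpha(ab)a+\beta(ab)b+\gamma(ab)(ab),\qquad (ba)(ab)=\alpha\, a(ab)+\beta\, b(ab)+\gamma(ab)(ab)
\]
lie in $V$ because every summand does (using $(ab)a=aba\in V$, $b(ab)=bab\in V$, and the closure statements above). Applying Theorem~\ref{twoax0}(iv(a)) then yields $V=A$.

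For (ii), I only need to verify $ba\in V$ in each Jordan case. When $a$ is Jordan of type $(\gl,\gd)$, Proposition~\ref{prop 3.4}(v) decomposes $b=\alpha_b a+b_{0,0}+b_{\gl,\gd}$, and Lemma~\ref{lem 3.3}(2),(5) specialize to $ab=\alpha_b a+\gl b_{\gl,\gd}$ and $ba=\alpha_b a+\gd b_{\gl,\gd}$; since $\gl\ne 0$, $b_{\gl,\gd}=\gl^{-1}(ab-\alpha_b a)\in V$, whence $ba\in V$. When $b$ is Jordan, the symmetric computation, decomposing $a$ with respect to $b$, places $a_{\gl',\gd'}\in\ff b+\ff ab$, whence $ba\in\ff b+\ff ab\subseteq V$. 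In either case (i) applies. I do not foresee a serious obstacle; the only delicate point is keeping the parenthesization of $aba$ and $bab$ consistent, which is precisely what the axis condition $L_aR_a=R_aL_a$ supplies.
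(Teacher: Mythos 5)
Your proof is correct and follows essentially the same route as the paper: both parts reduce to Theorem~\ref{twoax0}(i)--(iii) together with the hypothesis $ba\in V$ to show that $V$ is multiplicatively closed, and part (ii) is handled identically by extracting $b_{\gl,\gd}$ (resp.\ $a_{\gl',\gd'}$) from $ab$. The only cosmetic differences are that you package the conclusion of (i) through Theorem~\ref{twoax0}(iv(a)) and check $(ab)(ba)$, $(ba)(ab)$ explicitly, where the paper argues closure under $L_a,R_a,L_b,R_b$ directly, and that you spell out the case where only $b$ is of Jordan type, which the paper leaves to symmetry.
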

\begin{proof} (i)
By Theorem~\ref{twoax0}(i), $V$ is closed under $L_a.$ In particular
$aba\in V,$ so $V$ is closed under $R_a.$ Similarly $V$ is closed
under $R_b$ by Theorem~\ref{twoax0}(ii), so $bab\in V,$ and then $V$
is closed under $L_b.$ It remains to show that $(ab)(ab)\in V,$
which follows from Theorem~\ref{twoax0}(iii).

(ii) (For $a$ of Jordan type) $ba=\ga_b a+\gd b_{\gl,\gd},$ and
$b_{\gl,\gd}\in V$ by Theorem~\ref{twoax0}(i), so $ba\in V.$
\end{proof}

 We cannot improve (ii) directly to arbitrary axes.
 However, we have a positive result,  which will be developed further  in
 \cite{RoSe3}.

\begin{prop}\label{prop 3.10}
Let $a$ be an axis of type $(\gl,\gd)$. For $x \in A$, write $x =
\ga_xa + x_{0,0} + x_{\gl,0} + x_{0,\gd}+ x_{\gl,\gd}$ as in
\eqref{2de}, and define the vector space
\[
 V_a(x) = \ff a\oplus\ff x\oplus\ff ax\oplus \ff xa\oplus \ff axa.
\]
 Then
\[V_a(x) = \ff a
\oplus  \ff x_{0,0} \oplus \ff  x_{\gl,0} \oplus \ff x_{0,\gd}\oplus
\ff x_{\gl,\gd}.
\]
\end{prop}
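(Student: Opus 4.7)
The plan is to verify the equality of the two subspaces by computing each generator of one side in terms of the other, noting that the transition matrix between the five generators on each side is triangular with nonvanishing determinant $\gl^2\gd^2$.

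First I would use Definition~\ref{defs main3}(3) and the fact that $L_aR_a = R_aL_a$ to expand the five generators of $V_a(x)$ in terms of the components of the two-sided decomposition of~$x$. Since $L_a$ annihilates $x_{0,0}, x_{0,\gd}$ and scales $x_{\gl,0}, x_{\gl,\gd}$ by~$\gl$, and similarly for $R_a$ with~$\gd$, direct computation gives
\begin{align*}
a &= a, \\
x &= \ga_x a + x_{0,0} + x_{\gl,0} + x_{0,\gd} + x_{\gl,\gd}, \\
ax &= \ga_x a + \gl x_{\gl,0} + \gl x_{\gl,\gd}, \\
xa &= \ga_x a + \gd x_{0,\gd} + \gd x_{\gl,\gd}, \\
axa &= \ga_x a + \gl\gd x_{\gl,\gd}.
\end{align*}
This already shows one inclusion: $V_a(x) \subseteq \ff a + \ff x_{0,0} + \ff x_{\gl,0} + \ff x_{0,\gd} + \ff x_{\gl,\gd}$.

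Next I would establish the reverse inclusion by explicitly solving for each component. From the expansion of $axa$ one gets $x_{\gl,\gd} = (\gl\gd)^{-1}(axa - \ga_x a)$, which recovers Lemma~\ref{lem 3.3}(5). Substituting back into the expansion of $ax$ yields $x_{\gl,0} = \gl^{-1}(ax - \ga_x a) - x_{\gl,\gd} \in V_a(x)$, and similarly $x_{0,\gd} = \gd^{-1}(xa - \ga_x a) - x_{\gl,\gd} \in V_a(x)$. Finally $x_{0,0} = x - \ga_x a - x_{\gl,0} - x_{0,\gd} - x_{\gl,\gd} \in V_a(x)$.

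Since each of $a, x_{0,0}, x_{\gl,0}, x_{0,\gd}, x_{\gl,\gd}$ lies in $V_a(x)$ and vice versa, the two spans coincide. There is no real obstacle here: the argument is purely linear algebra once one writes the triangular transition matrix (with pivots $1,1,\gl,\gd,\gl\gd$, all nonzero since $\gl,\gd\notin\{0,1\}$), and the component formulas are just the two-sided version of the idempotent projection identities already recorded in Lemma~\ref{lem 3.3} and Proposition~\ref{prop 3.4}.
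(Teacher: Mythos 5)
Your proof is correct and is essentially the paper's argument made explicit: the paper invokes Proposition~\ref{prop 2.7} (the Vandermonde/transition-matrix fact) on the left and right and then uses Lemma~\ref{lem 3.3} to trade $L_a^2,R_a^2$ for $L_a,R_a$ modulo $\ff a$, whereas you simply write out the five generators $a,x,ax,xa,axa$ against the components and invert the triangular system with pivots $1,1,\gl,\gd,\gl\gd$. Both computations are the same linear algebra, and your explicit back-substitution correctly recovers each component, so nothing is missing.
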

\begin{proof}
Applying a special case of Proposition~\ref{prop 2.7} both on the left and right,
\[\sum _{i,j=0}^{2} \ff L_a^i
R_a^jx =\ff a \oplus  \ff x_{0,0} \oplus \ff  x_{\gl,0} \oplus \ff
x_{0,\gd}\oplus \ff x_{\gl,\gd}.\]  We conclude with Lemma~\ref{lem 3.3}(3), which lets us replace $L_a^2$ by $L_a$, and $R_a^2$
by~$R_a$ (when we add $\ff a$).
\end{proof}


\begin{thebibliography}{99999}


\bibitem[A]{A} A.A.~Albert, {\it   Power-associative rings,}  Trans. Amer. Math. Soc. {\bf 64} (1948),
552--593.

\bibitem [DPSC]{DPSC} Tom De Medts, S.F.~Peacock, D.~Shpectorov, and M. Van
Couwenberghe, {\it Decomposition algebras and axial algebras},
arXiv:1905.03481v3 [math.RA], 25 Aug 2020.
\bibitem[HRS]{HRS2}
J.~I.~Hall, F.~Rehren,  S.~Shpectorov, {\it Primitive axial algebras
of Jordan type,} J.~Algebra {\bf 437} (2015), 79--11.


\bibitem[HSS]{HSS}
J.~I.~Hall, Y.~Segev, S.~Shpectorov, {\it Miyamoto involutions in
axial algebras of Jordan type half}, Israel Journal of Mathematics
{\bf  223} (2018),  261--308.

 

\bibitem[RS1]{RoSe} L.~Rowen and Y.~Segev, {\it Associative and Jordan algebras generated by two
idempotents},   Algebras and Representation Theory 20(6) (2017),
1495--1504.

 

\bibitem[RS2] {RoSe3} L.~Rowen and Y.~Segev, {\it Finitely generated axial algebras},
preprint (2021).



\bibitem[Sch]{Sch}  R.D.~Schafer, {\it Noncommutative Jordan algebras of characteristic 0},  Proc. Amer. Math. Soc. {\bf 6} (1955),
472--475.


\bibitem[T]{T} V.G.~ Tkachev,  Habil, {\it The universality of one half in commutative nonassociative algebras with
identities}, J. Algebra, to appear.
\end{thebibliography}
\end{document}